\numberwithin{equation}{section} 
\theoremstyle{plain}
\newtheorem{theorem}{Theorem}[section]
\newtheorem{lemma}{Lemma}[section]
\newtheorem{proposition}{Proposition}[section]
\theoremstyle{definition}
\newtheorem{remark}{Remark}
\renewcommand{\Re}{\operatorname{Re}}
\renewcommand{\Im}{\operatorname{Im}}
\begin{document}

\title{Strong orthogonality between the M\"obius function and nonlinear exponential functions in short intervals}
\author{Bingrong Huang}
\address{School of Mathematics \\ Shandong University \\ Jinan \\Shandong 250100 \\China}
\email{brhuang@mail.sdu.edu.cn}
\date{\today}

\begin{abstract}
  Let $\mu(n)$ be the M\"{o}bius function, $e(z) = \exp(2\pi iz)$, $x$ real and $2\leq y \leq x$.
  This paper proves two sequences $(\mu(n))$ and $(e(n^k \alpha))$ are strongly orthogonal in short intervals.
  That is, if $k \geq 3$ being fixed and $y\geq x^{1-1/4+\varepsilon}$, then for any $A>0$, we have
  \[
    \sum_{x< n \leq x+y} \mu(n) e\left( n^k \alpha \right) \ll y(\log y)^{-A}
  \]
  uniformly for $\alpha \in \mathbb{R}$.
\end{abstract}

\keywords{Exponential sums, M\"{o}bius function, short intervals, strong orthogonality}
\maketitle
\tableofcontents

\section{Introduction}

Let $\mu(n)$ be the M\"{o}bius function, $e(z) = \exp(2\pi iz)$, $k\geq 1$ an integer,
$x$ real and $2 \leq y \leq x$.
The following classical result proved by Davenport \cite{davenport1937some} for $k=1$ and
by Hua \cite{hua1965additive} for $k\geq 2$:
for any $A>0$, we have
\begin{equation} \label{eqn: davenport}
  \sum_{n\leq x} \mu(n) e(n^k \alpha)  \ll  x (\log x)^{-A}
\end{equation}
uniformly for $\alpha \in \mathbb{R}$.
Using Heath-Brown's identity, the estimate of the exponential sum involving
the M\"{o}bius function in short intervals
\begin{equation} \label{eqn: exponential sums in short intervals}
  S_k(x,y;\alpha) = \sum_{x < n \leq x+y} \mu(n) e\left(n^k \alpha\right)
\end{equation}
was first studied by Zhan \cite{zhan1991davenport}.
For the case $k=1$, Zhan \cite{zhan1991davenport} gave an upper bound of
the form $y (\log y)^{-A}$ for any $A>0$, which holds for $y \geq x^{2/3 + \varepsilon}$,
and then refined by Zhan \cite{zhan1991representation} to $y \geq x^{5/8 + \varepsilon}$.
For the case $k=2$, Liu and Zhan \cite{liu1999estimation} first established a nontrivial
estimate of $S_2(x,y;\alpha)$ for $y \geq x^{11/16 + \varepsilon}$ and all $\alpha \in \mathbb{R}$.
In \cite{lv2007exponential}, L\"u and Lao improved this result to $y \geq x^{2/3 + \varepsilon}$
which was as good as what was previously derived from the Generalized Riemann Hypothesis in \cite{liu1999estimation}.
For the case $k\geq 3$, the result for all $\alpha$ was first given
by Liu and Zhan \cite{liu1996estimation}, and recently Huang and Wang \cite{huang2015exponential}
gave an improvement by combining with the method of Kumchev.

Following \cite{sarnak2011three}, we say two sequences $(a_n)$ and $(b_n)$
of complex numbers are \emph{asymptotically orthogonal} (in short, ``orthogonal'') if
\begin{equation}
  \sum_{n\leq N} a_n b_n = o\left( \left(\sum_{n\leq N} |a_n|^2 \right)^{1/2} \left(\sum_{n\leq N} |b_n|^2 \right)^{1/2}  \right)
\end{equation}
as $N\rightarrow \infty$; and \emph{strongly asymptotically orthogonal}
(in short, ``strongly orthogonal'') if
\begin{equation}
  \sum_{n\leq N} a_n b_n = O_{A} \left( (\log N)^{-A} \sum_{n\leq N} |a_n b_n|  \right)
\end{equation}
for every $A > 0$, uniformly for $N \geq 2$.
The \emph{M\"obius randomness law} (see \cite[\S 13.1]{iwaniec2004analytic})
asserts that the sequence $(\mu(n))$
should be orthogonal to any ``\emph{reasonable}'' sequence.
Sarnak has recently posed a more precise conjecture in this direction
and we refer the reader to \cite{sarnak2010mobius}, \cite{sarnak2011three}
and \cite{liu2013mobius} for recent developments on this theme.
In particular, Sarnak \cite[Conjecture 4]{sarnak2011three} proposed to replace the condition
``\emph{reasonable}'' by ``\emph{bounded with zero topological entropy}''.
The bound (\ref{eqn: davenport}) shows that the two sequences $(\mu(n))$ and $(e(n^k \alpha))$ are strongly orthogonal.
Similarly, we can say two sequences $(a_n)$ and $(b_n)$ of complex numbers
are \emph{orthogonal in short intervals of exponent $\Delta$} if
\begin{equation}
  \sum_{x<n\leq x+y} a_n b_n = o\left( \left(\sum_{x<n\leq x+y} |a_n|^2 \right)^{1/2} \left(\sum_{x<n\leq x+y} |b_n|^2 \right)^{1/2}  \right)
\end{equation}
as $x\rightarrow \infty$, for $y\geq x^{1-\Delta+\varepsilon}$;
and \emph{strongly orthogonal in short intervals of exponent $\Delta$} if
\begin{equation}
  \sum_{x<n\leq x+y} a_n b_n = O_{A} \left( (\log x)^{-A} \sum_{x<n\leq x+y} |a_n b_n|  \right)
\end{equation}
for every $A > 0$, uniformly for $2\leq y \leq x$ and $y\geq x^{1-\Delta+\varepsilon}$.

In this paper, the question we seek to answer is how large the exponent $\Delta_k$ can be for two sequences $(\mu(n))$ and $(e(n^k \alpha))$ uniformly for all $\alpha\in [0,1]$ in the general case $k\geq 3$.
That is, we deal with $S_k(x,y;\alpha)$ for all $\alpha\in [0,1]$ and $k\geq 3$.
We say that the exponent $\Delta_k$ is \emph{admissible} if $(\mu(n))$ and $(e(n^k \alpha))$ are strongly orthogonal in short intervals of exponent $\Delta_k$ for all $\alpha\in [0,1]$.
So far, in \cite{huang2015exponential} the authors show that one has the admissible exponent
\begin{equation}
    \Delta_k = \left\{\begin{array}{ll}
      \frac{1}{5},    &  \textrm{if }\ k = 3; \\
      \frac{1}{2k},   &  \textrm{if }\ k \geq 4,
    \end{array}\right.
\end{equation}
which will be very small with the increase of $k$.
The main result of this paper shows that there are large admissible exponents $\Delta_k$ for all $k\geq 3$ being fixed.

\begin{theorem} \label{thm: exponential sums}
  Let $k\geq 3$. The exponent $\Delta_k = 1/4$ is admissible. That is, if $y = x^\theta$ with $3/4 < \theta \leq 1$, then for any $A>0$, we have
  \begin{equation*}
    S_k(x,y;\alpha) \ll y (\log y)^{-A},
  \end{equation*}
  uniformly for $\alpha \in (-\infty, +\infty)$.
\end{theorem}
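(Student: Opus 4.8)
The plan is to run the Hardy--Littlewood circle method in short intervals, using Heath-Brown's identity to decompose $\mu$, in the spirit of Zhan \cite{zhan1991davenport} and Liu--Zhan \cite{liu1996estimation}, but organising the bilinear estimates so that the admissible exponent no longer deteriorates with $k$. First I would fix a parameter $Q$ of size comparable to $x^{k-1}y$ and, by Dirichlet's theorem, write $\alpha=a/q+\beta$ with $(a,q)=1$, $1\le q\le Q$ and $|\beta|\le 1/(qQ)$. On the \emph{major arcs}, where $q\le(\log x)^{B}$ with $B=B(A)$ large, I would split $n$ into residue classes modulo $q$ and use that the derivative of $n^{k}\beta$ is $O(x^{k-1}/(qQ))=O(1/(qy))$, so that partial summation removes the factor $e(n^{k}\beta)$ at the cost of an $O(1)$ factor; what remains is $\sum_{x<n\le x+y,\ n\equiv b\,(q)}\mu(n)\ll yq^{-1}(\log x)^{-2A}$, which is a standard short-interval estimate for the M\"obius function in progressions to small moduli (it holds well below $y=x^{3/4}$), and summation over $b\bmod q$ settles this case.

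On the \emph{minor arcs}, where $(\log x)^{B}<q\le Q$, I would apply Heath-Brown's identity with a large fixed level $J$ and a cut-off $\beta_{0}\in(0,\tfrac12)$ at our disposal (to be taken $=1/4$), expressing $S_{k}(x,y;\alpha)$ as a sum of $O_{J}\!\left((\log x)^{O_{J}(1)}\right)$ terms, each of which is either a \emph{Type I} sum $\sum_{v\sim V}a_{v}\sum_{x<vn\le x+y}e\!\left((vn)^{k}\alpha\right)$ with $a_{v}\ll d(v)^{O_{J}(1)}$ and $V\le x^{1-\beta_{0}}$, or a \emph{Type II} sum $\sum_{m\sim M}\sum_{n\sim N}a_{m}b_{n}\,e\!\left((mn)^{k}\alpha\right)$ with $MN\asymp x$, $\min(M,N)\ge x^{1/2-\beta_{0}}$ and $\sum_{m}|a_{m}|^{2}\ll M(\log x)^{O_{J}(1)}$, $\sum_{n}|b_{n}|^{2}\ll N(\log x)^{O_{J}(1)}$.

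For a Type I sum the inner sum $\sum_{n\in\mathcal{I}_{v}}e\!\left(v^{k}\alpha\,n^{k}\right)$ runs over an interval $\mathcal{I}_{v}\subset(x/v,(x+y)/v]$ of length $\asymp y/v\ge x^{\theta-1+\beta_{0}}$, and after completing this short sum and invoking the standard bound for $\sum e(\gamma n^{k})$ over a short interval (re-approximate $\gamma=v^{k}\alpha$ by Dirichlet and apply Weyl's inequality, or Vinogradov's mean value theorem) one gains a factor $x^{-\delta_{k}}$, $\delta_{k}=\delta_{k}(k)>0$, for all $v$ outside a set whose contribution is $\ll y(\log x)^{-A}$; these exceptional $v$ are those for which $v^{k}\alpha$ is abnormally close to a rational with small denominator, and they are negligible because $q>(\log x)^{B}$. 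Summing trivially over $v\sim V$ then gives an admissible bound provided $y/v$ is a genuine positive power of $x$, i.e.\ provided $\beta_{0}>1-\theta$. For a Type II sum I would, after smoothly splitting the constraint $x<mn\le x+y$, apply Cauchy--Schwarz in the shorter of $m,n$, say $m$, obtaining $|S|^{2}\ll M(\log x)^{O(1)}\sum_{n_{1},n_{2}\sim N}b_{n_{1}}\bar b_{n_{2}}\sum_{m\in\mathcal{J}}e\bigl((n_{1}^{k}-n_{2}^{k})\alpha\,m^{k}\bigr)$ with $\mathcal{J}$ an interval of length $\asymp y/N$. The diagonal $n_{1}=n_{2}$ contributes $\ll My(\log x)^{O(1)}$, which is admissible since $M\le x^{1/2}=y\,x^{1/2-\theta}$ with $1/2-\theta<0$; in the off-diagonal I would put $h=n_{1}-n_{2}\ne0$ and estimate $\sum_{m\in\mathcal{J}}e(\gamma_{h}m^{k})$, $\gamma_{h}\asymp hN^{k-1}\alpha$, again by completion and Weyl/Vinogradov, disposing of the rare bad $h$ (which cost only a factor $q^{-1/2}\le(\log x)^{-B/2}$). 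The resulting bound is $\ll y(\log x)^{-A}$ provided $M\gg x^{1-\theta}$, and --- crucially --- the gain $\delta_{k}$ cancels out of this condition, so it is uniform in $k$.

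Putting the two together, Type I needs $\beta_{0}>1-\theta$ while Type II needs $\beta_{0}\le\theta-\tfrac12$, and these are compatible precisely when $\theta>3/4$, in which range $\beta_{0}=1/4$ works; this is exactly where the value $\Delta_{k}=1/4$ comes from, and it is uniform in $k$ because neither constraint involves $\delta_{k}$. Collecting the major-arc, Type I and Type II contributions and choosing $B$, $J$, $Q$, $\beta_{0}$ as above then yields $S_{k}(x,y;\alpha)\ll y(\log y)^{-A}$ uniformly in $\alpha$. The main obstacle is the third step: making the short-interval estimates for $\sum e(\gamma n^{k})$ genuinely effective --- the interval being much shorter than the base point and $\gamma$ only weakly Diophantine --- and controlling the exceptional sets where $\gamma$ lies too close to a low-denominator rational, uniformly in $k$; this is where the bulk of the technical work lies, and where one would bring in the sharper exponential-sum machinery (in the spirit of Kumchev's method used in \cite{huang2015exponential}) to push all the ranges through.
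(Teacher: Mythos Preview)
Your overall strategy---circle method, combinatorial identity, Type I/II bilinear sums, with the threshold $\theta>3/4$ emerging from matching the Type I and Type II ranges---is essentially what the paper does on its set $\mathcal{C}$ (the paper uses Vaughan's identity rather than Heath--Brown's, which is cosmetic). The genuine gap is in your treatment of $\alpha$ close to rationals with small denominator.

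With your choice $Q\asymp x^{k-1}y$, your major arcs consist of those $\alpha=a/q+\beta$ with $q\le(\log x)^{B}$ and $|\beta|\le 1/(qQ)\ll 1/(x^{k-1}y)$, and there your partial-summation argument is fine. But consider $\alpha=a/q_{0}+\lambda$ with $q_{0}\le(\log x)^{B}$ and
\[
\frac{1}{x^{k-1}y}\ <\ |\lambda|\ \le\ \frac{(\log x)^{B}}{x^{k-2}y^{2}}.
\]
Such $\alpha$ is \emph{not} on your major arcs: its Dirichlet approximant with parameter $Q$ is a different fraction with a much larger denominator $q'\gg x^{k-2}y^{2}(\log x)^{-B}$, so you send it to the minor arcs. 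Yet your minor-arc machinery cannot work here. For every $v$ in a Type~I sum, $v^{k}\alpha$ lies within $v^{k}|\lambda|$ of the small-denominator rational $v^{k}a/q_{0}$; likewise, after Cauchy--Schwarz in Type~II, each $(n_{2}^{k}-n_{1}^{k})\alpha$ is close to $(n_{2}^{k}-n_{1}^{k})a/q_{0}$. The ``exceptional'' set you plan to dispose of is therefore not rare---it is everything---and Weyl/Vinogradov yields no saving. The large $q'$ is an artefact of your choice of $Q$; the arithmetic of the sum is still governed by the tiny $q_{0}$. Concretely, if one traces the Kumchev-style Type~II argument through for such $\alpha$, the endgame bound has shape $w_{k}(q_{0})q_{0}^{\varepsilon}L^{C}y\big/\bigl(1+y^{2}x^{k-2}|\lambda|\bigr)$, and for $|\lambda|$ near $1/(x^{k-1}y)$ the denominator is $1+y/x\asymp 1$, so nothing is gained. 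Since $\theta<1$, this bad range of $|\lambda|$ has width a genuine power of $x$ and cannot be absorbed into either side.

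The paper handles exactly this intermediate region (its set $\mathcal{B}$) by a separate, purely analytic argument: Perron's formula for $\sum\mu(n)\chi(n)$, the Hooley--Huxley contour for $1/L(s,\chi)$, and zero-density estimates in short $t$-intervals (Proposition~\ref{prop: B}). This is not an optional refinement---without it the proof does not close. Your sketch needs either to introduce this third regime and bring in the $L$-function machinery, or to find a genuinely new way to extract cancellation when $\alpha$ is close, but not extremely close, to a rational with tiny denominator.
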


\begin{remark}
  In contrast to the admissible exponents derived in the previous work cited above, this exponent is bounded away from zero as $k\rightarrow\infty$.

  An estimate for
  \begin{equation}
    \sum_{x < n \leq x+y} \Lambda(n) e\left(n^k \alpha\right)
  \end{equation}
  can be established by the same methods in this paper, where $\Lambda(n)$ is the von Mongoldt function. And then combined with the Hardy--Littlewood circle method, this enables us to give some short interval variants of Hua's theorems in additive number theory \cite{hua1965additive}, such as \cite[Theorems 2 and 3]{liu1996estimation} and \cite[Theorems 2 and 3]{huang2014exponential}.
\end{remark}

\noindent
{\bf Notation.}
Throughout the paper, the letter $\varepsilon$ denotes a sufficiently small positive real number, while $c$ without subscript stands for an absolute positive constant; both of them may be different at each occurrence. For example, we may write
\[
  (\log x)^c (\log x)^c \ll (\log x)^c, \quad  x^\varepsilon \ll y^\varepsilon.
\]
Any statement in which $\varepsilon$ occurs holds for each positive $\varepsilon$, and any implied constant in such a statement is allowed to depend on $\varepsilon$.
The letter $p$, with or without subscripts, is reserved for prime numbers.
In addition, as usual, $e(z)$ denotes $\exp(2\pi iz)$.
We write $(a, b) = {\rm gcd}(a, b$), and we use $m \sim M$ as an abbreviation for the condition $M < m \leq 2M$.

\section{Outline of our method}

Let
\begin{equation} \label{eqn: PQR}
  P = L^{c_1}, \quad
  Q = x^{k-2} y^2 / P, \quad
  R = x^{k-1} y,
\end{equation}
where here and in the sequel $L$ stands for $\log x$, and the letter $c$ with or
without subscripts denotes positive constants which depend at most on $k$ and $A$,
fixed in advance.
Write $\alpha$ in the form
\begin{equation} \label{eqn: Dirichlet's theorem on rational approximations}
  \alpha = \frac{a}{q} + \lambda, \quad (a,q) = 1.
\end{equation}
To estimate $S_k(x,y,\alpha)$ for $\alpha$ in $[0,1]$, we divide $[0,1]$ into
three subsets according to the idea due to Pan \cite{pan1959some}.
Let $P,\ Q$ and $R$ be defined as in (\ref{eqn: PQR}). It follows from Dirichlet's lemma on rational approximations that every $\alpha \in [0,1]$ can be written as (\ref{eqn: Dirichlet's theorem on rational approximations}), with $q,\lambda$ satisfying one of the following three conditions:
\begin{eqnarray*}
  (a)  &&  q \leq P,\ |\lambda|\leq \frac{1}{R}; \\
  (b)  &&  q \leq P,\ \frac{1}{R} < |\lambda| \leq \frac{1}{qQ}; \\
  (c)  &&  P < q \leq Q,\ |\lambda|\leq \frac{1}{qQ}.
\end{eqnarray*}
Denote by $\mathcal{A},\ \mathcal{B}$ and $\mathcal{C}$ the three subsets of $\alpha$ satisfying (a), (b) and (c) respectively. Then $[0,1]$ is the disjoint union of $\mathcal{A},\ \mathcal{B}$ and $\mathcal{C}$.

In \S \ref{sec: A} and \S \ref{sec: B} we employ analytic methods to deal with the case $\alpha\in \mathcal{A} \cup \mathcal{B}$, and obtain the following.
\begin{proposition} \label{prop: A}
  Let $k\geq 3$ and $y = x^\theta$ with $7/12 < \theta \leq 1$. Then for any $c_1$, we have
  \[
    S_k(x,y;\alpha) \ll y L^{-A}
  \]
  holds uniformly for $\alpha \in \mathcal{A}$.
\end{proposition}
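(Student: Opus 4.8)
The plan is to treat the major arcs $\mathcal{A}$ by the classical device of splitting into residue classes modulo $q$, stripping off the smooth oscillation coming from $\lambda$, and reducing to the prime number theorem for the M\"obius function twisted by a rational additive character of small denominator in a short interval. Fix $\alpha = a/q + \lambda \in \mathcal{A}$, so $q \le P = L^{c_1}$ and $|\lambda| \le 1/R = 1/(x^{k-1}y)$. Since $e(n^k a/q)$ depends only on $n$ modulo $q$,
\[
  S_k(x,y;\alpha) = \sum_{r=1}^{q} e\!\left(\frac{a r^k}{q}\right) \sum_{\substack{x < n \leq x+y \\ n \equiv r \ (q)}} \mu(n)\, e(n^k \lambda).
\]
For $t \in (x, x+y] \subseteq (x, 2x]$ one has $\bigl|\tfrac{d}{dt} e(t^k\lambda)\bigr| = 2\pi k\, t^{k-1}|\lambda| \ll_k 1/y$, so $e(t^k\lambda)$ has total variation $\ll_k 1$ on this range; partial summation therefore gives
\[
  \sum_{\substack{x < n \leq x+y \\ n \equiv r \ (q)}} \mu(n)\, e(n^k \lambda) \ll_k \max_{x < t \leq x+y}\ \bigl| M(t;q,r) \bigr|, \qquad M(t;q,r) := \sum_{\substack{x < n \leq t \\ n \equiv r \ (q)}} \mu(n),
\]
so, as $|e(ar^k/q)| = 1$, the Proposition follows once we show $\sum_{r=1}^{q} \max_{x < t \leq x+y}|M(t;q,r)| \ll_{c_1,A} y L^{-A}$ for $q \le L^{c_1}$.

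For this, expand the congruence into additive characters, $\mathbf{1}_{n\equiv r\,(q)} = q^{-1}\sum_{h \bmod q} e(h(n-r)/q)$, so that $M(t;q,r) = q^{-1}\sum_{h \bmod q} e(-hr/q)\sum_{x < n \le t}\mu(n)e(hn/q)$ and hence
\[
  \sum_{r=1}^{q} \max_{x<t\le x+y} \bigl|M(t;q,r)\bigr| \ \le\ \sum_{h \bmod q}\ \max_{x<t\le x+y}\ \Bigl| \sum_{x<n\le t}\mu(n)\,e\!\left(\frac{hn}{q}\right)\Bigr|.
\]
Writing each $h/q$ in lowest terms as $a'/q'$ with $q' \mid q \le L^{c_1}$, it suffices to prove
\[
  \max_{x<t\le x+y}\ \Bigl| \sum_{x<n\le t}\mu(n)\,e\!\left(\frac{a'n}{q'}\right)\Bigr| \ \ll_{c_1,A}\ y\, L^{-A}
\]
uniformly for $(a',q')=1$ and $q' \le L^{c_1}$ (including $q'=1$), since summing this over the at most $L^{c_1}$ values of $h$ costs only a power of $L$, absorbed by enlarging $A$. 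This twisted short-interval estimate for $\mu$ is known in the range $y \ge x^{7/12+\varepsilon}$, which, as $7/12 < \theta$, covers the hypothesis of the Proposition (one uses a value of $\varepsilon$ slightly smaller than the one in $y = x^\theta$, and disposes of the $t$ with $t-x$ below this threshold trivially by bounding the sum by $O(t-x+1)$, the resulting contribution being $\ll y L^{-A}$). It is proved by the usual route: one applies Heath-Brown's identity to decompose $\mu(n)e(a'n/q')$ into Type I and Type II sums, passes to a contour integral for $1/L(s,\chi)$, and invokes the Huxley-type zero-density estimate together with the classical zero-free region for the Dirichlet $L$-functions modulo $q'$; this analytic input is exactly what fixes the exponent $7/12$, and it is uniform in $q'$ for $q' \le L^{c_1}$ at Siegel--Walfisz strength.

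Everything except the last estimate is routine: the decomposition modulo $q$ uses only $q \le P$, the partial summation uses only the defining bound $|\lambda| \le 1/R$ of $\mathcal{A}$, and the passage to reduced fractions $a'/q'$ is elementary. The main obstacle --- and the sole place where information about the zeros of $L$-functions enters --- is the short-interval prime number theorem for the twisted M\"obius function, which is also the source of the restriction $\theta > 7/12$; since this is comfortably below the threshold $3/4$ of Theorem \ref{thm: exponential sums}, the arc $\mathcal{A}$ is not what limits the final result.
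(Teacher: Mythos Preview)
Your argument is correct and follows essentially the same plan as the paper: strip off the smooth factor $e(n^k\lambda)$ by partial summation (using $|\lambda|\le 1/R$ so that the total variation is $O_k(1)$), then reduce to the short-interval prime number theorem for $\mu$ twisted by characters of modulus $\le L^{c_1}$, which via Ramachandra's Hooley--Huxley contour argument holds for interval lengths $\ge x^{7/12+\varepsilon}$. The only cosmetic difference is that the paper passes directly to primitive Dirichlet characters via Lemma~\ref{lemma: S_k(chi)} and then quotes Ramachandra for \eqref{eqn: mobius sum in short intervals}, whereas you go through residue classes and additive characters first; your one-line sketch of the quoted estimate slightly conflates the combinatorial (Heath-Brown identity) and analytic (Perron $+$ contour shift for $1/L(s,\chi)$) routes, but the claimed bound is exactly what the paper uses.
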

\begin{proposition} \label{prop: B}
  Let $k\geq 3$, and $y = x^\theta$ with $2/3 < \theta < 1$. Then for any $c_1$, we have
  \[
    S_k(x,y;\alpha) \ll y L^{-A}
  \]
  holds uniformly for $\alpha \in \mathcal{B}$.
\end{proposition}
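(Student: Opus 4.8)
The plan is to treat $\alpha \in \mathcal{B}$, where $\alpha = a/q + \lambda$ with $q \leq P$ and $1/R < |\lambda| \leq 1/(qQ)$, by a combination of Heath-Brown's identity and the Cauchy--Schwarz / large-sieve machinery tailored to short intervals. First I would apply Heath-Brown's identity of level $K$ (with $K = k+1$ or so) to $\mu(n)$ on the interval $x < n \leq x+y$, reducing $S_k(x,y;\alpha)$ to $O(L^{2K})$ sums of the shape
\[
  \sum_{\substack{m_1 \cdots m_j n_1 \cdots n_\ell = n \\ x < n \leq x+y}} \Bigl(\prod \mu(m_i)\Bigr)\Bigl(\prod \log n_1\Bigr) e(n^k \alpha),
\]
where the $m_i$ are supported on dyadic ranges $\ll x^{1/K}$. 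A dyadic decomposition of all variables then presents these as ``Type I'' sums $\sum_{m \sim M}\sum_{n \sim N} a_m e((mn)^k \alpha)$ with $a_m$ of controlled size (no summation condition on $n$, or a smooth one), and ``Type II'' sums $\sum_{m \sim M}\sum_{n \sim N} a_m b_n e((mn)^k \alpha)$ with $a_m, b_n \ll x^\varepsilon$ and $M$ in a middle range $x^{\sigma_1} \ll M \ll x^{\sigma_2}$. The goal is to show each such bilinear piece is $\ll y L^{-A}$ whenever $MN \asymp x$ and $y = x^\theta$ with $2/3 < \theta < 1$.

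For the Type II sums the strategy is Cauchy--Schwarz in the $m$-variable followed by opening the square in $n$, which produces an exponential sum $\sum_{x/m < n, n' \leq (x+y)/m} e\bigl((n^k - n'^k) m^k \alpha\bigr)$; writing $n' = n+h$ with $|h| \ll y/M$ and using Weyl differencing (or the classical bound $\sum_{n \sim N} e(\beta n^k) \ll N^{1+\varepsilon}(q^{-1} + N^{-1} + q N^{-k})^{1/2^{k-1}}$ applied to the appropriate derived modulus), one exploits the fact that for $\alpha \in \mathcal{B}$ the point $n^k \alpha$ genuinely oscillates: the denominator $q$ is not too small since $q \leq P = L^{c_1}$ but $|\lambda|$ is bounded below by $1/R$, and $R = x^{k-1}y$, which forces cancellation of the right order. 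The short-interval constraint $y \geq x^{2/3+\varepsilon}$ is precisely what makes the error terms from the differencing (of size $\sim x/y$ per step, over $k-1$ steps, against a main term of size $y$) acceptable. For the Type I sums, a smooth partial summation in $n$ reduces matters to bounding $\sum_{m \sim M} a_m \min\bigl(y/M, \|m^k \alpha\|^{-1}\bigr)$-type quantities, again handled by the $q \leq P$, $|\lambda| > 1/R$ structure together with a standard divisor-bound on $a_m$.

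The main obstacle is the Type II estimate in the narrow window where $M$ is close to $\sqrt{x}$, because there the Cauchy--Schwarz loss is largest and one has the least room between the diagonal contribution $h = 0$ (size $\asymp y N / M = y N^2/x$, which after the square root gives $y N / \sqrt{x} \cdot \sqrt{M} = y$ only marginally) and the off-diagonal. Here I expect to need Zhan's trick of iterating Weyl differencing only $k-2$ times and then applying a van der Corput / large sieve bound, or alternatively to split according to the size of $q$ relative to powers of $L$ and absorb the bad range into $\mathcal{A}$. Since Proposition~\ref{prop: A} already covers $\theta > 7/12$ for the major-arc-type set $\mathcal{A}$, it suffices here to get any power saving $y L^{-A}$ for $\theta > 2/3$, which gives enough slack that the Weyl exponent $1/2^{k-1}$ — though poor for large $k$ — still beats the $x/y < x^{1/3}$ error accumulation. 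The remaining steps (summing the $O(L^{2K})$ dyadic pieces, tracking the $\varepsilon$'s, and verifying $\sum_{x < n \leq x+y}|\mu(n) e(n^k\alpha)| \asymp y/\zeta(2) \gg y$ so that the conclusion is genuinely a strong-orthogonality statement) are routine.
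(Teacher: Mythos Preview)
Your approach is not the paper's, and it has a genuine gap. The paper treats $\mathcal{B}$ \emph{analytically}: via Lemma~\ref{lemma: S_k(chi)} it reduces to sums $S_k(\chi)=\sum_{x_1<m\le x_1+y_1}\mu(m)\chi(m)e(m^kd^k\lambda)$, applies Perron's formula for $\sum\mu(m)\chi(m)$, shifts to the Hooley--Huxley contour for $1/L(s,\chi)$, and bounds the oscillatory integral $\int u^{s-1}e(\lambda d^ku^k)\,du$ by first/second-derivative tests. This converts the problem into a zero-density question on $t$-windows of height $H\asymp x/y+|\lambda|x^{k-1}y$; the constraint $\theta>2/3$ is exactly what makes $H\le x^{1/3+\varepsilon}$ so that the exponent $8/3$ in Lemma~\ref{lemma: zero density in short intervals} wins. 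The heavy lifting is done by zeros of $L(s,\chi)$, i.e.\ by the arithmetic of $\mu$.

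Your bilinear/Weyl scheme is essentially the machinery the paper reserves for $\mathcal{C}$, and on $\mathcal{B}$ it cannot close. After Cauchy--Schwarz in a Type~II sum you face $\sum_{m}e(\gamma m^k)$ with $\gamma=(n_1^k-n_2^k)\alpha$. Since $\alpha=a/q+\lambda$ with $q\le P=L^{c_1}$, and since $|(n_1^k-n_2^k)\lambda|\ll (y/M)N^{k-1}/(qQ)\ll P x^{1-k/2}/y$ is minute for $k\ge 3$, the best rational approximation to $\gamma$ has denominator $q'=q/(q,n_1^k-n_2^k)\le L^{c_1}$. Weyl's inequality then saves at most a fixed power of $\log x$, and for $q'=1$ (which occurs whenever $q\mid n_1^k-n_2^k$, in particular for $q=1$) it saves nothing. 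But Cauchy--Schwarz has already erased the $\mu$-structure of the coefficients, so no other cancellation is available; concretely, for $q=1$ and $|\lambda|$ just above $1/R$ your Type~II bound collapses to the trivial $yL^{c}$. The same obstruction hits Type~I: by the major-arc case of Lemma~\ref{lemma: exponential sums in short intervals}, the inner sum $\sum_{n}e((mn)^k\alpha)$ is $\asymp y/M$ when $q=1$ and $yx^{k-1}|\lambda|\asymp 1$, so nothing is gained. The moral is that on $\mathcal{B}$ the saving in $S_k(x,y;\alpha)$ comes from $\mu$ (equivalently from $1/L(s,\chi)$ left of $\Re s=1$), not from the phase; any method that first squares away the M\"obius weights cannot see it, and your fallback of absorbing the bad range into $\mathcal{A}$ is blocked because Proposition~\ref{prop: A} genuinely needs $|\lambda|\le 1/R$ for partial summation to work.
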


To prove the propositions, we appeal to zero-density estimate of $L$-functions
in short intervals, see Zhan \cite{zhan1991davenport}.
Since there is no explicit formula for $\sum_{n\leq u} \mu(n)\chi(n)$
in terms of zeros of the $L$-function with $\chi$ being a primitive character modulo $l$ and $l\leq P$,
we may use the so-called \emph{Hooley--Huxley contour} method as in \cite{ramachandra1976some}.
For $\alpha\in \mathcal{A}$, we can use the partial summation formula
to handle the exponential function $e(n^k\lambda)$. But for $\alpha \in \mathcal{B}$,
we must employ the exponential integral to deal with the exponential function.

In \S \ref{sec: C} we follow Kumchev's approach to handle the case $\alpha\in \mathcal{C}$, and get the following.
\begin{proposition} \label{prop: C}
  Let $k\geq 3$ and $y = x^\theta$ with $3/4 < \theta \leq 1$. Then there exists $c_1>0$ such that the estimate
  \[
    S_k(x,y;\alpha) \ll y L^{-A}
  \]
  holds uniformly for $\alpha \in \mathcal{C}$.
\end{proposition}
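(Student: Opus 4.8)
The plan is to estimate $S_k(x,y;\alpha)$ on $\mathcal{C}$ by combining a bilinear decomposition of the M\"{o}bius function with Kumchev's estimates for Weyl sums over short intervals. First I would apply Heath-Brown's identity for $\mu$ at a suitably large fixed level $J$: for $x<n\le 2x$,
\[
  \mu(n)=\sum_{j=1}^{J}(-1)^{j-1}\binom{J}{j}\sum_{\substack{m_1\cdots m_j\,n_1\cdots n_{j-1}=n\\ m_i\le z}}\mu(m_1)\cdots\mu(m_j),\qquad z=(2x)^{1/J}.
\]
Inserting this into $S_k(x,y;\alpha)$ and splitting each variable into dyadic ranges $\ell\sim L$ reduces the problem to bounding $O(L^{c})$ sums
\[
  T=\sum_{\ell_1\sim L_1}\cdots\sum_{\ell_r\sim L_r}\Big(\prod_i a_i(\ell_i)\Big)\,\mathbf{1}_{x<\ell_1\cdots\ell_r\le x+y}\,e\big((\ell_1\cdots\ell_r)^k\alpha\big),
\]
with $L_1\cdots L_r\asymp x$, each $a_i$ equal to $\mu$ or to $1$, and $L_i\le z$ whenever $a_i=\mu$. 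Merging the dyadic blocks into two groups of sizes $M$ and $N=x/M$, the sum $T$ is either a \emph{Type I} sum $\sum_{m\sim M}a_m\sum_{x/m<n\le(x+y)/m}e((mn)^k\alpha)$ with $|a_m|\ll x^{\varepsilon}$, or a \emph{Type II} sum $\sum_{m\sim M}\sum_{n\sim N}a_mb_n\,e((mn)^k\alpha)$ with $|a_m|,|b_n|\ll x^{\varepsilon}$; the level $J$ of the identity, together with the freedom in combining the dyadic pieces, allows me to force $M$ into whichever range is convenient.

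For the Type I sums the inner sum is a Weyl sum of degree $k$ over an interval of length $\asymp y/M$, and since on $\mathcal{C}$ the number $\alpha$ has a rational approximation $a/q$ with $P<q\le Q$ and $|\lambda|\le 1/(qQ)$, Kumchev's short-interval Weyl estimates yield a power saving provided $M\le x^{\eta_1}$ for a suitable $\eta_1=\eta_1(k,\theta)>0$; the choices of $Q$ and $R$ in \eqref{eqn: PQR} are made precisely so that this is the operative regime. For the Type II sums I would apply Cauchy--Schwarz to discard one coefficient sequence and then, after one or more Weyl-differencing or van der Corput steps, reduce matters to counting the solutions of a Diophantine inequality involving $\alpha$, differences of the surviving variable, and auxiliary parameters; Kumchev's lemmas, which exploit both the moderate size $P<q\le Q$ of the denominator and the shortness of the ranges, then give cancellation as long as $M$ --- equivalently $N$ --- lies in a middle range $x^{\eta_2}\le M\le x^{1-\eta_2}$. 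The combinatorial bookkeeping, done as in Kumchev's treatment of $\sum\Lambda(n)e(n^k\alpha)$ in short intervals, shows that when $y=x^{\theta}$ with $\theta>3/4$ every $T$ produced by the identity lands in one of these two cases with a margin; taking the constant $c_1$ in $P=L^{c_1}$ large enough then converts the power saving into the factor $L^{-A}$ and absorbs the $x^{\varepsilon}$ and $q^{\varepsilon}$ losses, so that $S_k(x,y;\alpha)\ll yL^{-A}$ on $\mathcal{C}$.

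The crux is the Type II estimate in the borderline ranges of $M$, where a crude bound for the differenced sum falls just short once $y$ is as small as $x^{3/4+\varepsilon}$; this is precisely where Kumchev's refinement is indispensable, since one must exploit the extra averaging over $m$ (or reorganise the order of differencing, or bring in a double large sieve) to recover the missing power of $x$, uniformly for all fixed $k\ge3$. The remaining point requiring care is to verify that the resulting constraints on $M$ are simultaneously compatible with the Type I constraint $M\le x^{\eta_1}$ and with the constraint coming from the level $J$ of the Heath-Brown identity, so that the Type I and Type II cases really do exhaust the decomposition; everything else is routine dyadic splitting and bookkeeping with logarithmic factors.
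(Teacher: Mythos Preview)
Your plan is broadly the same strategy the paper follows---a combinatorial decomposition of $\mu$ into Type~I and Type~II bilinear sums, followed by Kumchev-style estimates exploiting the approximation $\alpha=a/q+\lambda$ with $P<q\le Q$---so the architecture is right. Two points of comparison are worth noting.

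First, the paper uses Vaughan's identity (Lemma~\ref{lemma: Vaughan's identity}) rather than Heath--Brown's. With the choices $U=x^{\theta/2-\rho}$ and $V=x^{1-\theta+2\rho}$ the decomposition produces exactly one Type~I range $M\le V$ and one Type~II range $V\le M\le (x+y)/U\asymp x^{1-\theta/2+\rho}$ (split at $x^{1/2}$ and handled by symmetry), so the combinatorial check that every piece lands in an admissible range is immediate. Your Heath--Brown approach also works, but you then owe the verification that with $z=(2x)^{1/J}$ and $J$ chosen depending on $\theta$, every dyadic box can be grouped into either $M\le x^{1-\theta+2\rho}$ (Type~I) or $x^{1-\theta+2\rho}\ll M\ll x^{\theta-2\rho}$ (Type~II); this is routine but is exactly the ``remaining point requiring care'' you flag at the end.

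Second, your description of the Type~II estimate is where the sketch is thinnest, and this is precisely the heart of the argument. The paper does not use ``one or more Weyl-differencing or van der Corput steps'' in the classical sense, nor a double large sieve. After a single Cauchy--Schwarz it applies Lemma~\ref{lemma: exponential sums in short intervals} (the Daemen short-interval Weyl bound) to the inner sum, which creates a new rational approximation $b/r$ to $(n_2^k-n_1^k)\alpha$; it then iterates Dirichlet's theorem three times---passing successively to approximations $b_1/r_1$ of $hd^k\alpha$, then $b_2/r_2$ of $d^k\alpha$, and finally back to the original $a/q$---using Lemma~\ref{lemma: some inequalities} at each stage to sum $w_k$ over the free variable. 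The output is not a raw power saving but a bound of the shape
\[
  \mathcal{T}_2^2 \ll y^{2-2\rho+\varepsilon} + \frac{q^{\varepsilon}L^{C}w_k(q)\,y^{2}}{1+y^{2}x^{k-2}|\alpha-a/q|},
\]
and it is only the hypothesis $q>P=L^{c_1}$ on $\mathcal{C}$, together with $w_k(q)\ll q^{-1/k}$, that turns the second term into $y^{2}L^{-2A}$ once $c_1>2(k+1)(A+C)$. So ``taking $c_1$ large converts the power saving into $L^{-A}$'' is slightly misleading: the first term already has a genuine power saving in $x$, while the second term is controlled only logarithmically, via $q$. Making this chain of approximations explicit is what distinguishes a proof from a plan here.
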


In proving the above proposition, we make use of the results in Daemen \cite{daemen2010asymptotic} (see Lemma \ref{lemma: exponential sums in short intervals}) and apply Kumchev's method to estimate the exponential sums of type I and type II:
\begin{equation}
  \sum_{m\sim M} a(m) \sum_{ x<mn\leq x+y} e\left((mn)^k \alpha\right),
\end{equation}
\begin{equation}
  \sum_{m \sim M} a(m) \sum_{ x< mn \leq x+y} b(n) e\left((mn)^k \alpha\right)
\end{equation}
respectively, then appeal to Vaughan's identity, where $a(m)\ll \tau^c(m)$, $b(n)\ll \tau^c(n)$, and $\tau(n)$ is the divisor function.

It is easily seen that Theorem \ref{thm: exponential sums} follows from Propositions \ref{prop: A}, \ref{prop: B} and \ref{prop: C}.

\section{Preliminaries} \label{eqn: preliminaries}

%

\begin{lemma} \label{lemma: S_k(chi)}
  Let $k\geq 3$ and $\eta_k = 1 - \frac{1}{k}$. Write $\alpha=\frac{a}{q}+\lambda$ with $(a,q)=1$.
  Then for any $\varepsilon > 0$, we have
  \begin{equation*}
    S_k(x,y;\alpha) \ll q^{\eta_k+\varepsilon} \sum\limits_{d|q} \max_{\chi_{q/d}}
    \bigg| \sum\limits_{x/d < m \leq (x+y)/d \atop (m,q)=1}\mu(m)\chi(m)e(m^kd^k\lambda) \bigg|.
  \end{equation*}
  Here the implied constant is absolute.
\end{lemma}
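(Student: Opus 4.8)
The plan is to start from the definition $S_k(x,y;\alpha) = \sum_{x < n \leq x+y} \mu(n) e(n^k\alpha)$ and split the sum according to the value of $(n,q) = d$, so that $n = dm$ with $(m,q/d)=1$. Since $\mu$ is multiplicative and $\mu(dm)$ vanishes unless $(d,m)=1$, writing $e(n^k\alpha) = e(n^k a/q) e(n^k \lambda)$ gives $e((dm)^k a/q) = e(m^k a d^k / q)$; because $(m,q/d)=1$ and $d^k a/q = a d^{k-1}/(q/d)$, the residue $m^k a d^{k-1} \bmod (q/d)$ runs through a fixed linear function of $m^k$. The upshot is that, for each $d \mid q$, the inner sum is an exponential sum of the shape $\sum_{x/d < m \leq (x+y)/d,\ (m,q/d)=1} \mu(dm)\, e_{q/d}(b m^k)\, e(m^k d^k \lambda)$ for some $b$ with $(b, q/d)=1$. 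The second step is to detect the condition "$m^k \equiv$ (something) $\bmod q/d$" — equivalently to express the additive character $e_{q/d}(bm^k)$ in terms of Dirichlet characters mod $q/d$. The standard device here is to write $e_{q/d}(bm^k)$ via Gauss sums: one has $e_{q/d}(bm^k) = \frac{1}{q/d}\sum_{\text{something}}\dots$, or more efficiently one uses the classical bound for the complete exponential sum $\sum_{a \bmod q} e_q(a n^k) = \sum_{\chi} (\text{Gauss sum}) \chi(n)$-type expansion, which introduces a sum over characters $\chi$ modulo $q/d$ together with a Gauss-sum weight.

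The key estimate driving the $q^{\eta_k + \varepsilon}$ factor is the classical Weyl-type / Gauss-sum bound: the relevant complete sums $\sum_{a \bmod q, (a,q)=1} e_q(a m^k)$ and the associated Gauss sums $\tau(\chi, e_q(\cdot^k))$ are bounded by $O(q^{1 - 1/k + \varepsilon})$, i.e. by $q^{\eta_k + \varepsilon}$, uniformly; this is exactly the point where the exponent $\eta_k = 1 - 1/k$ enters. Summing the character expansion over $m$ and pulling out the Gauss-sum coefficients, the contribution of each modulus $q/d$ is bounded by $q^{\eta_k+\varepsilon}$ times $\max_{\chi \bmod q/d} \big| \sum_{x/d < m \leq (x+y)/d,\ (m,q)=1} \mu(m)\chi(m) e(m^k d^k \lambda)\big|$, after absorbing the (bounded) extra factor $\mu(d)$ and replacing $\mu(dm)$ by $\mu(d)\mu(m)$ on the set $(d,m)=1$ — note $(m,q/d)=1$ together with $d\mid q$ lets us freely insert the condition $(m,q)=1$ up to the range of $m$, since primes dividing $d$ contribute $m$ not coprime to $q$ but those are handled by passing to a sub-sum, or more cleanly by observing $\mu(dm) = 0$ unless $(d,m)=1$, so the effective summation condition on $m$ is $(m,q)=1$. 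Finally, summing the bound over all $d \mid q$ yields the stated inequality.

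The main obstacle — really the only non-bookkeeping step — is obtaining the character expansion of the incomplete sum $\sum_{(m,q/d)=1, m \text{ in range}} e_{q/d}(bm^k)(\cdots)$ with a clean $q^{\eta_k+\varepsilon}$ bound on all the Gauss-sum coefficients simultaneously, uniformly in the (unrestricted) parameter $\lambda$. One has to be slightly careful that $\chi$ need not be primitive and that the Gauss sum $\sum_{t \bmod q/d} \chi(t) e_{q/d}(bt^k)$ must be estimated for every $\chi$ mod $q/d$, not just primitive ones; this is handled by the standard multiplicativity of these "cubic/$k$-th power Gauss sums" over prime powers together with the elementary prime-power estimates (e.g. for $p \nmid k$ one gets $O(p^{1-1/k})$ at a single prime, and higher prime powers are treated directly). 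Everything else — the splitting by $\gcd$, reindexing $n = dm$, inserting $(m,q)=1$, bounding $|\mu(d)| \leq 1$, and summing a divisor sum which contributes only $q^\varepsilon \le q^{\eta_k+\varepsilon}$ more — is routine.
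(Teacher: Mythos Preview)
Your outline is correct and follows the standard route, which is precisely what the paper invokes by citing Liu--Zhan \cite{liu1996exponential}, Lemma~2: sort $n$ by $d=(n,q)$, write $n=dm$, use $\mu(dm)=\mu(d)\mu(m)\mathbf{1}_{(d,m)=1}$ so that the effective condition becomes $(m,q)=1$, then expand the additive character $e\!\left(ad^{k-1}m^{k}/(q/d)\right)$ via orthogonality of multiplicative characters modulo $q/d$ and bound the resulting generalized Gauss sums $\sum_{(t,q/d)=1}\bar\chi(t)\,e_{q/d}(ad^{k-1}t^{k})$ by $O\!\big((q/d)^{1-1/k+\varepsilon}\big)$. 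One small clarification: your sentence ``$(m,q/d)=1$ together with $d\mid q$ lets us freely insert $(m,q)=1$'' is not quite right as stated; you genuinely need the extra coprimality $(m,d)=1$ coming from $\mu(dm)\neq 0$, and it is the pair of conditions $(m,q/d)=1$ and $(m,d)=1$ together that is equivalent to $(m,q)=1$. With that fix the bookkeeping goes through exactly as you describe.
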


\begin{proof}
  It is analogous to the proof of \cite[Lemma 2]{liu1996exponential}.
\end{proof}

%

\begin{lemma} \label{lemma: zero density for alpha near 1}
  Let $N(\alpha,T,\chi)$ be the number of zeros of $L(s,\chi)$ in
  $$
    \Re(s) \geq \alpha \ \textrm{and} \ |\Im(s)| \leq T,
  $$
  where $\chi = \chi_l$ is a primitive character modulo $l$.
  For $\frac{1}{2} \leq \alpha \leq 1$, $T\geq 2$ and $l\geq 1$, we have
  $$
    N(\alpha,T,\chi) \ll  (lT)^{167(1-\alpha)^{3/2}} (\log lT)^{17}.
  $$
\end{lemma}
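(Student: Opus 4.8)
\emph{Proof proposal.} Since $N(\alpha,T,\chi)$ is non-increasing in $\alpha$ and the Riemann--von Mangoldt formula gives $N(\tfrac12,T,\chi)\ll lT\log(lT)$, the asserted estimate is trivial as soon as $167(1-\alpha)^{3/2}\geq 1$, i.e.\ for $\alpha\leq 1-\delta_0$ with $\delta_0=167^{-2/3}$ an absolute constant. Thus we may assume $1-\delta_0<\alpha\leq 1$, so that the whole content is a density estimate in a thin strip just to the left of $\Re s=1$, and in that range $(1-\alpha)^{3/2}$ is genuinely the shape produced by Vinogradov's method.

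The plan is the classical zero-detection method combined with the Halász--Montgomery large-values inequality. For a zero $\rho=\beta+i\gamma$ of $L(s,\chi)$ with $\beta\geq\alpha$, $|\gamma|\leq T$, one mollifies: with $M(s,\chi)=\sum_{n\leq Y}\mu(n)\chi(n)n^{-s}$ one has $L(s,\chi)M(s,\chi)=1-\sum_{n>Y}b_n\chi(n)n^{-s}$ where $b_n=\sum_{d\mid n,\,d\leq Y}\mu(d)$. A Mellin-transform argument against a smooth kernel (with $X=(lT)^{O(1)}$, $Y=X^{\delta}$) shows that at $s=\rho$ some dyadic block $\sum_{n\sim N}b_n\chi(n)n^{-\rho}e^{-n/X}$, for a suitable $N$ with $Y<N\leq X^{1+\varepsilon}$, has absolute value $\gg(\log lT)^{-1}$. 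Choosing a maximal $1$-separated (in $\gamma$) subset $\mathcal{R}$ of such zeros, and pigeonholing so that a single $N$ serves a positive proportion of them, we get $N(\alpha,T,\chi)\ll|\mathcal{R}|(\log lT)^{2}$.

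To bound $|\mathcal{R}|$, apply the Halász--Montgomery inequality to the Dirichlet polynomial $D(s)=\sum_{n\sim N}c_n\chi(n)n^{-s}$ at the points $s=\rho$, $\rho\in\mathcal{R}$: one bounds $\sum_{\rho}|D(\rho)|^{2}$ by a diagonal term $\ll N\|c\|_2^2$ plus off-diagonal terms controlled by the second and fourth power moments $\int_{-T}^{T}|L(\alpha+it,\chi)|^{2}\,dt$ and $\int_{-T}^{T}|L(\alpha+it,\chi)|^{4}\,dt$. This is where the Vinogradov input enters: for $1-\delta_0<\alpha\leq 1$ one has the pointwise bound $L(\alpha+it,\chi)\ll (l(|t|+2))^{B(1-\alpha)^{3/2}}(\log l(|t|+2))^{c}$ with $B$ absolute, deduced from Vinogradov's estimate for $\sum_{n\leq N}\chi(n)n^{-it}$ by splitting $n$ into residue classes modulo $l$ and applying the Vinogradov mean value theorem on each progression. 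Feeding this into the moment estimates, optimising over $N$, and then converting the resulting large-values count back to zeros, yields $N(\alpha,T,\chi)\ll(lT)^{c(1-\alpha)^{3/2}}(\log lT)^{c}$; carrying the numerical constants through the optimisation shows that one may take the exponent constant to be at most $167$ and the accumulated logarithmic losses to be at most $(\log lT)^{17}$.

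The main obstacle is obtaining the Vinogradov-type pointwise bound for $L(\alpha+it,\chi)$ \emph{uniformly in the modulus} $l$ with the correct weight $l^{B(1-\alpha)^{3/2}}$ rather than a wasteful $l^{1/2}$: the character twist has to be absorbed by decomposing the relevant exponential sum into arithmetic progressions modulo $l$ and running Vinogradov's method (Weyl differencing followed by the mean value theorem) on each, tracking the $l$-dependence at every step. A secondary, purely bookkeeping, difficulty is keeping the exponent constant a fixed number ($167$) valid on the whole range $\tfrac12\leq\alpha\leq 1$; this is settled by the reduction in the first paragraph, which confines the genuine argument to $\alpha$ close to $1$, where the $(1-\alpha)^{3/2}$ saving is exactly what Vinogradov's estimates deliver.
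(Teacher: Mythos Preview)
The paper does not prove this lemma; its entire proof is a one-line citation to Corollary~12.5 of Montgomery's \emph{Topics in Multiplicative Number Theory}. Your proposal is, in outline, precisely the argument behind that corollary: zero detection via a mollified $L$-function, Hal\'asz--Montgomery large values, and the Vinogradov--Korobov pointwise bound $L(\sigma+it,\chi)\ll (l(|t|+2))^{B(1-\sigma)^{3/2}}(\log)^{c}$ as the input that produces the exponent $(1-\alpha)^{3/2}$. So your approach is not different from the paper's---it is a sketch of the very result the paper is quoting.

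Two remarks on the sketch itself. First, the reduction in your opening paragraph is fine but slightly overstated: the Riemann--von Mangoldt count is $N(\tfrac12,T,\chi)\ll T\log(lT)$, not $lT\log(lT)$, so the trivial range is even a bit wider than you claim. Second, your description of how to get the $l$-uniform Vinogradov bound (``split into residue classes modulo $l$ and run Vinogradov on each progression'') is not the route Montgomery takes and, as stated, would lose a factor of $l$ rather than gain $l^{B(1-\alpha)^{3/2}}$; the actual argument (Montgomery, Chapter~6) passes through partial summation and an exponential-sum estimate for $\sum_{N<n\le 2N}\chi(n)n^{it}$ in which the character is handled via its Gauss-sum representation, and the $l$-dependence enters through the length of the resulting complete sum. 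This is the only place where your sketch would need real work to become a proof; the rest is standard and correctly identified. The specific constants $167$ and $17$ are Montgomery's bookkeeping, not something one would expect to rederive from scratch.
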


\begin{proof}
  See \cite[Corollary 12.5]{montgomery1971topics}.
\end{proof}

\begin{lemma} \label{lemma: zero density in short intervals}
  Use the notation in Lemma \ref{lemma: zero density for alpha near 1}. Let
  $$ N(\alpha,T,H,\chi)  =  N(\alpha,T+H,\chi)-N(\alpha,T,\chi).$$
  Then for $\frac{1}{2}\leq \alpha \leq 1$, $T^{\frac{35}{108}+\varepsilon} \leq H \leq T$ and $l \geq 1$, we have
  $$
    N(\alpha,T,H,\chi)\ll (lH)^{\frac{8}{3}(1-\alpha)}(\log lH)^{216}.
  $$
\end{lemma}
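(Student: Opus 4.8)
The plan is to deduce the short-interval zero-density estimate from a known short-interval zero-density result for the Riemann zeta function and Dirichlet $L$-functions, together with the Halász--Montgomery large-values machinery. The starting point is the relation between zeros in the box $\Re(s)\geq\alpha$, $T<\Im(s)\leq T+H$ and large values of a Dirichlet polynomial approximating $L(s,\chi)^{-1}$ or $\log L(s,\chi)$ on a well-spaced set of points. Concretely, I would first reduce to counting a $1$-spaced (or $(\log lH)$-spaced) set $\mathcal{S}$ of points $s_r=\sigma_r+it_r$ with $\sigma_r\geq\alpha$ and $t_r\in(T,T+H]$, one near each zero, such that $|\sum_{n\sim N}\chi(n)n^{-s_r}|\gg N^{1-\sigma}(\log lH)^{-c}$ for a suitable $N$ in the range dictated by the functional equation / approximate functional equation (so $N$ of size roughly $(l(|t|+1))^{1/2}$, which here is $\asymp(lT)^{1/2}$). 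This is the standard Ingham--Montgomery reflection step; the key point is that $H$ only restricts the range of $t_r$, not the length $N$ of the Dirichlet polynomial, which is why the final bound is in terms of $lH$ in the "density" factor but the exponent constant $8/3$ reflects the $(lT)^{1/2}$-length polynomial.

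Next I would apply the hybrid large-values theorem (Montgomery's mean value estimate combined with the Halász--Montgomery inequality, as in \cite[Ch.~7--8]{montgomery1971topics}) to bound $|\mathcal{S}|$. Writing $R=|\mathcal{S}|$ and $V=N^{1-\alpha}(\log lH)^{-c}$ for the size of the large values, the large-values lemma gives
\[
  R \ll \left( N + R\, (lH)\, N^{-1} \right) N^{2(1-\alpha)} (\log lH)^{c},
\]
where the second term inside the parentheses uses that the points are confined to a $t$-interval of length $H$ (this is where the short interval enters) and the relevant modulus is $l$. Under the hypothesis $H\geq T^{35/108+\varepsilon}$ and with $N\asymp(lT)^{1/2}$, the term $R(lH)N^{-1}$ is dominated provided one is in the right range; balancing the two contributions and taking $N\asymp(lT)^{1/2}$ yields $R\ll (lH)^{8/3(1-\alpha)}(\log lH)^{c}$ after optimizing, with the precise exponent $17$ on the logarithm tracked from Lemma~\ref{lemma: zero density for alpha near 1} and the number of dyadic ranges summed. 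One must also invoke the Lindelöf-on-average or twelfth-moment type input for $\zeta$ — this is exactly the role played by the exponent $35/108$, which is the classical Heath-Brown / Huxley exponent in the fourth-moment (or twelfth-moment) estimate for $\zeta(1/2+it)$ in short intervals — to control the mean square of the Dirichlet polynomial over the short window $(T,T+H]$.

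The main obstacle I anticipate is bookkeeping the hybrid (both $l$ and $H$) version of the large-values argument cleanly and verifying that the restriction $H\geq T^{35/108+\varepsilon}$ is exactly what is needed for the short-interval mean-value estimate (the twelfth-moment bound for $\zeta$, or the corresponding bound for $L(s,\chi)$, over an interval of length $H$) to hold in the usable form; a naive application would force a larger lower bound on $H$. A secondary technical point is handling small moduli $l$ uniformly together with the $t$-aspect, i.e.\ ensuring the estimate is genuinely in $lH$ rather than separately in $l$ and $H$, which requires the zero-detecting polynomial and the large-sieve/mean-value inputs to be set up with the $lt$-conductor throughout. Since the statement is quoted without proof in the source paper (it is attributed to standard references), I would ultimately cite the relevant hybrid zero-density result — e.g.\ along the lines of Jutila or the treatment in \cite{montgomery1971topics} combined with Zhan's short-interval refinement \cite{zhan1991davenport} — rather than reproduce the full large-values computation, indicating the above as the route by which the bound is obtained.
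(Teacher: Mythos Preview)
The paper's own proof is a one-line citation: ``See \cite[Theorem 3]{zhan1992mean}.'' You correctly anticipated this at the end of your proposal, so in that sense your approach and the paper's coincide.

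That said, the sketch you offer as motivation contains a few inaccuracies that you should not leave standing if you include it. First, the large-values inequality you display,
\[
  R \ll \bigl( N + R\,(lH)\,N^{-1} \bigr) N^{2(1-\alpha)} (\log lH)^{c},
\]
is not a well-formed version of the Hal\'asz--Montgomery bound: as written it has $R$ linearly on both sides and cannot be solved to give a density exponent; the actual inequality involves $R^{1/2}$ or a more intricate structure, and deriving the exponent $8/3$ from it requires the specific short-interval mean-value inputs of Zhan rather than a simple ``balancing'' of two terms. Second, you refer to tracking ``the precise exponent $17$ on the logarithm,'' but the statement you are proving carries the exponent $216$, not $17$ (the $17$ belongs to Lemma~\ref{lemma: zero density for alpha near 1}). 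Third, the reference you propose to cite at the end, \cite{zhan1991davenport}, is not the one containing this result; the correct source is Zhan's 1992 mean-value paper \cite{zhan1992mean}. None of these affect the outcome if you simply cite the result as the paper does, but the sketch as written would mislead a reader about how the bound is actually obtained.
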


\begin{proof}
  See \cite[Theorem 3]{zhan1992mean}.
\end{proof}

\begin{lemma} \label{lemma: zero free region}
  For $l\geq 1$, $L(s,\chi)$ has no zeros in
  $$
    \sigma \geq 1-\frac{c_0}{\log l + (\log(T+2))^{\frac{4}{5}}}  \quad \textrm{and} \quad   |t|\leq T,
  $$
  where $c_0>0$ is a constant, except the only exceptional zero $\tilde{\beta}$. And for $l\leq (\log T)^c$ no such exceptional zero exists.
\end{lemma}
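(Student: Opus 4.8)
The plan is to derive this by the classical de la Vall\'ee Poussin argument, upgraded in the $t$-aspect by inserting a Vinogradov-type growth bound for $L(s,\chi)$; the $l$-aspect stays at de la Vall\'ee Poussin strength, which is the source of the possible exceptional zero. First I would record the elementary inequality $3+4\cos\phi+\cos2\phi=2(1+\cos\phi)^2\geq0$, which gives, for $\sigma>1$,
\[
  -3\frac{\zeta'}{\zeta}(\sigma)-4\,\Re\frac{L'}{L}(\sigma+it,\chi)-\Re\frac{L'}{L}(\sigma+2it,\chi^2)\ \geq\ 0 .
\]
Here $-\zeta'/\zeta(\sigma)\leq(\sigma-1)^{-1}+O(1)$. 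For the two $L$-function terms I would use the Hadamard partial-fraction expansion of $(L'/L)(s,\chi)$ together with a Borel--Carath\'eodory estimate; the key quantitative input is a bound of the shape $\log|L(\sigma+it,\chi)|\ll\log l+(\log(|t|+2))^{4/5}$ valid in a thin strip $\sigma\geq1-c(\log(|t|+2))^{-4/5}$, which comes from Vinogradov's method applied to the exponential sums $\sum_{n\sim N}\chi(n)n^{-it}$ (equivalently, a clean weakening of the Vinogradov--Korobov estimate). This replaces the classical majorant $O(\log(l(|t|+2)))$ for $-\Re(L'/L)$ on the right edge of the strip by $O(\log l+(\log(|t|+2))^{4/5})$, while the contribution of one chosen zero $\rho_0=\beta_0+i\gamma_0$ is kept explicit and the remaining zeros are dropped by positivity.

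Taking $t=\gamma_0$, bounding the $\chi^2$-term by $O(\log l+(\log(|\gamma_0|+2))^{4/5})$ when $\chi^2$ is non-principal, and choosing $\sigma=1+\delta(\log l+(\log(|\gamma_0|+2))^{4/5})^{-1}$ with $\delta>0$ small, the inequality collapses to
\[
  \frac{4}{\sigma-\beta_0}\ \leq\ \frac{3}{\sigma-1}+c\bigl(\log l+(\log(|\gamma_0|+2))^{4/5}\bigr) ,
\]
and a suitable choice of $\delta$ yields $\beta_0\leq1-c_0(\log l+(\log(|\gamma_0|+2))^{4/5})^{-1}$. When $\chi$ is not real (so $\chi^2\neq\chi_0$) this is the full claim. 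When $\chi=\bar\chi$ is real, $\chi^2$ is principal and $-\Re(L'/L)(\sigma+2it,\chi^2)$ picks up a pole contribution that is positive and large when $|\gamma_0|$ is small; the argument then only forbids zeros with $|\gamma_0|$ bounded away from $0$, and the standard separate treatment of real zeros of a real $L$-function (at most one zero near $s=1$, necessarily real) accounts for the lone exceptional zero $\tilde\beta$.

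For $l\leq(\log T)^c$ I would remove $\tilde\beta$ by Siegel's theorem: $\tilde\beta$ belongs to a real primitive character modulo some $l'\mid l$, and $1-\tilde\beta\gg_\varepsilon(l')^{-\varepsilon}\gg_\varepsilon(\log T)^{-c\varepsilon}$; choosing $\varepsilon=1/(2c)$ gives $1-\tilde\beta\gg(\log T)^{-1/2}$, which for large $T$ exceeds $c_0(\log l+(\log(T+2))^{4/5})^{-1}\asymp(\log T)^{-4/5}$, so $\tilde\beta$ lies to the left of the claimed region. (This makes the constants ineffective, which is harmless since Siegel's theorem already enters the final application; when $c<8/5$ one can instead use the effective Dirichlet class-number bound $1-\tilde\beta\gg l^{-1/2}(\log l)^{-3}$.)

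The step I expect to be the real obstacle is the growth estimate for $|L(\sigma+it,\chi)|$ in the thin strip: beating the classical $\log$ in the $t$-aspect genuinely requires Vinogradov's mean value theorem (or van der Corput's method), and it must be carried out uniformly in both $l$ and $t$ so that the $\log l$ and $(\log(|t|+2))^{4/5}$ combine cleanly with no cross terms; everything else is the routine de la Vall\'ee Poussin manipulation. In practice one simply cites a reference where this hybrid Vinogradov--Korobov zero-free region for Dirichlet $L$-functions is proved and recorded in the stated form.
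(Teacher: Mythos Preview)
The paper's own proof of this lemma is a single line: ``See \cite[Satz 6.2]{prachar1957primzahlverteilung}.'' It does not reproduce any argument. Your sketch is therefore not a different route so much as an honest expansion of what lies behind that citation, and it is essentially correct: the $3$--$4$--$1$ inequality combined with a Vinogradov-type growth bound for $L(s,\chi)$ near $\sigma=1$ yields the $(\log T)^{4/5}$ in the $t$-aspect, the $\log l$ in the $q$-aspect remains at de la Vall\'ee Poussin strength, the real-character case leaves the possible exceptional $\tilde\beta$, and Siegel's theorem disposes of $\tilde\beta$ when $l\leq(\log T)^c$ (at the cost of ineffectivity, which is harmless here). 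You already identify the only substantive step---the uniform Vinogradov bound---and correctly note that in practice one cites it; that is exactly what the paper does.

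One small quibble: the growth input you state, $\log|L(\sigma+it,\chi)|\ll\log l+(\log(|t|+2))^{4/5}$ in a strip of width $\asymp(\log(|t|+2))^{-4/5}$, is not quite the form in which the Vinogradov estimate is usually recorded; one typically has a bound of the shape $|\zeta(\sigma+it)|\ll t^{c(1-\sigma)^{3/2}}(\log t)^{2/3}$ (and its $L$-function analogue), from which the exponent $4/5$ in the zero-free region emerges after optimising in the de la Vall\'ee Poussin inequality. The conclusion is the same, but if you were to write this out in full you would want to phrase the input that way.
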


\begin{proof}
  See \cite[Satz 6.2]{prachar1957primzahlverteilung}.
\end{proof}

\begin{lemma}[Vaughan's identity] \label{lemma: Vaughan's identity}
  Let $U,V \geq 1$. Then for any $n > \max\{U,V\}$, we have
  \begin{equation}
    \mu(n) = - \sum\limits_{\substack{lmd=n \\ 1\leq d\leq V \\  1\leq m\leq U}} \mu(d) \mu(m) + \sum\limits_{\substack{lmd=n \\ d>V \\ m>U}} \mu(d) \mu(m).
  \end{equation}
\end{lemma}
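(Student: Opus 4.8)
The plan is to prove the identity purely formally in the ring of arithmetic functions under Dirichlet convolution, which I will write as $*$; let $\mathbf 1$ denote the constant function $1$ and $\delta$ the convolution identity ($\delta(1)=1$, $\delta(n)=0$ for $n>1$). The only fact from elementary number theory that I need is the classical relation $\mathbf 1*\mu=\delta$, i.e.\ $\sum_{d\mid n}\mu(d)=\delta(n)$; everything else is a finite rearrangement of sums, hence legitimate coefficient by coefficient. (Equivalently one may run the same computation with the formal Dirichlet series $\zeta(s)$, $1/\zeta(s)=\sum_n\mu(n)n^{-s}$ and the Dirichlet polynomials $\sum_{m\le U}\mu(m)m^{-s}$, $\sum_{d\le V}\mu(d)d^{-s}$.)

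First I would introduce the truncations: $\mu_{\le V}(d)=\mu(d)$ if $d\le V$ and $0$ otherwise, $\mu_{>V}=\mu-\mu_{\le V}$, and likewise $\mu_{\le U}$, $\mu_{>U}$. Then the two sums on the right of the claimed identity are exactly $(\mathbf 1*\mu_{\le V}*\mu_{\le U})(n)$ and $(\mathbf 1*\mu_{>V}*\mu_{>U})(n)$: writing $lmd=n$ and splitting $d$ at $V$ and $m$ at $U$ is just the expansion of these triple convolutions, with the free variable $l$ accounting for the factor $\mathbf 1$.

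Next I would compute
\[
  F \;=\; \mathbf 1*\mu_{>V}*\mu_{>U}\;-\;\mathbf 1*\mu_{\le V}*\mu_{\le U}.
\]
Substituting $\mu_{>V}=\mu-\mu_{\le V}$ and $\mu_{>U}=\mu-\mu_{\le U}$ and expanding the product, the term $+\,\mathbf 1*\mu_{\le V}*\mu_{\le U}$ cancels against the subtracted copy, and there remain the three terms $\mathbf 1*\mu*\mu$, $-\,\mathbf 1*\mu*\mu_{\le U}$ and $-\,\mathbf 1*\mu_{\le V}*\mu$. Using commutativity and associativity to isolate a factor $\mathbf 1*\mu=\delta$ in each of them, together with $\delta*g=g$, I obtain
\[
  F \;=\; \mu-\mu_{\le U}-\mu_{\le V}.
\]
Evaluating at $n$ gives $F(n)=\mu(n)-\mu(n)\mathbf 1_{n\le U}-\mu(n)\mathbf 1_{n\le V}$, and for $n>\max\{U,V\}$ both indicator terms vanish, so $F(n)=\mu(n)$. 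Re-expressing $F(n)$ via the two triple sums from the previous paragraph then yields precisely the assertion.

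There is no real obstacle: the statement is formal and the argument is a short bookkeeping computation. The only points to watch are keeping the truncation of $d$ attached to $V$ and that of $m$ to $U$ when commuting factors in the convolutions (so the final sums match the statement exactly), and noticing that the spurious diagonal correction $-\mu_{\le U}-\mu_{\le V}$ is killed precisely by the hypothesis $n>\max\{U,V\}$ --- which is why that hypothesis appears. A reader who prefers to avoid the convolution-ring formalism can instead verify $\sum_{d\mid n}\mu(d)=\delta(n)$ directly and carry out the same cancellations with explicit finite sums over the factorizations $lmd=n$.
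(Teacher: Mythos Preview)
Your argument is correct: the convolution-ring computation $F=\mathbf 1*\mu_{>V}*\mu_{>U}-\mathbf 1*\mu_{\le V}*\mu_{\le U}=\mu-\mu_{\le U}-\mu_{\le V}$ is exactly the standard derivation of Vaughan's identity for $\mu$, and the hypothesis $n>\max\{U,V\}$ kills the two residual terms just as you say. The paper does not give its own proof but simply cites \cite[Proposition~13.5]{iwaniec2004analytic}, whose argument is essentially the one you wrote out; so your approach coincides with the intended one.
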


\begin{proof}
  See \cite[Proposition 13.5]{iwaniec2004analytic}.
\end{proof}

\begin{lemma} \label{lemma: shiu}
  Suppose that $1\leq N < N'<  2x$, $N'- N > x^{\varepsilon}d$ and $(c,d) = 1$. Then for $j,\nu \geq 1$, we have
  \[\sum_{\substack{N\leq n\leq N'\\ n\equiv c~(\mathrm{mod}~d)}}\tau_j(n)^{\nu} \ll \frac{N'-N}{\varphi(d)} (\log N)^{j^{\nu}-1},\]
  the implied constant depending on $\varepsilon, j$, and $\nu$ at most.
\end{lemma}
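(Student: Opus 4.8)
The plan is to derive this from Shiu's Brun--Titchmarsh theorem for non-negative multiplicative functions, applied to $f(n) = \tau_j(n)^\nu$. So the first step is to check that $f$ lies in the class of functions covered by that theorem. Since $\tau_j(p^\ell) = \binom{\ell+j-1}{j-1} \le j^\ell$, we have $f(p^\ell) \le (j^\nu)^\ell$, so $f(p^\ell) \le A_1^\ell$ with $A_1 = j^\nu$ depending only on $j$ and $\nu$; and from the elementary inequality $\binom{a+j-1}{j-1} \le (a+1)^{j-1}$ we get $\tau_j(n) \le \tau(n)^{j-1}$, whence $f(n) = \tau_j(n)^\nu \ll_{\varepsilon,j,\nu} n^\varepsilon$ by the ordinary divisor bound. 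Thus $f$ satisfies the hypotheses of Shiu's theorem.

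The second ingredient is the relevant sum over primes: by Mertens' theorem $\sum_{p\le X} f(p)/p = j^\nu\sum_{p\le X}1/p = j^\nu \log\log X + O_{j,\nu}(1)$, so that $\exp\big(\sum_{p\le X,\ p\nmid d} f(p)/p\big) \ll_{j,\nu} (\log X)^{j^\nu}$, the restriction $p\nmid d$ only decreasing the left-hand side.

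Then I would apply Shiu's theorem with the right endpoint taken to be $N'$ and the interval length taken to be $N'-N$. The two structural hypotheses are met: the interval length satisfies $N'-N > x^\varepsilon d \ge x^\varepsilon$, which, since $N' < 2x$, exceeds a fixed positive power of $N'$; and the modulus satisfies $d < (N'-N)/x^\varepsilon \ll (N'-N)^{1-\varepsilon}$, again using $N'-N \le N' < 2x$. Shiu's theorem then yields
\[
  \sum_{\substack{N\le n\le N'\\ n\equiv c\,(d)}} \tau_j(n)^\nu \ \ll\ \frac{N'-N}{\varphi(d)\,\log N'}\,\exp\!\bigg(\sum_{\substack{p\le N'\\ p\nmid d}}\frac{f(p)}{p}\bigg) \ \ll\ \frac{N'-N}{\varphi(d)}\,(\log N')^{j^\nu-1},
\]
and since $N \asymp N' \asymp x$ in the range in which the lemma is used, $\log N' \asymp \log N$ and the claim follows. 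The only book-keeping required is to track the dependence of the implied constants on $\varepsilon$, $j$ and $\nu$ through these steps, which is harmless.

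The real content is of course concentrated in Shiu's theorem itself, a genuine Brun--Titchmarsh-type result: its proof splits each $n$ into its $z$-smooth part and its $z$-rough part, estimates the contribution of the smooth part by an Euler product, and bounds the number of admissible rough parts in the short progression via the fundamental lemma of sieve theory together with the Brun--Titchmarsh inequality. Were one to insist on a self-contained argument, reproducing that would be the main obstacle; granting it as known, the deduction above is routine.
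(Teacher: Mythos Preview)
Your proposal is correct and matches the paper's approach exactly: the paper's ``proof'' consists solely of the citation ``See \cite[Theorem 1]{shiu1980brun}'', and you have supplied precisely the verification that $\tau_j(n)^\nu$ lies in Shiu's class together with the Mertens computation needed to read off the exponent $j^\nu-1$. Your remark that $\log N'\asymp\log N$ only in the range actually used is apt---the lemma as stated is slightly loose when $N$ is very small---but this is the paper's imprecision, not yours.
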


\begin{proof}
  See \cite[Theorem 1]{shiu1980brun}.
\end{proof}

When $k\geq 3$, we define the multiplicative function $w_k(q)$ by
\begin{equation*}
    w_k\left(p^{ku+v}\right) = \left\{ \begin{array}{ll}
      k p^{-u-1/2},   & \textrm{if } u\geq0,\ v=1, \\
      p^{-u-1},       & \textrm{if } u\geq0,\ v=2,\ldots,k.
    \end{array}\right.
\end{equation*}
By the argument of \cite[Theorem 4.2]{vaughan1997hardy}, we have
\begin{equation} \label{eqn: S(q,a)}
  S(q,a) = \sum_{1\leq x\leq q} e(ax^k/q) \ll q w_k(q) \ll q^{1-1/k}
\end{equation}
whenever $k\geq 3$ and $(a,q)=1$.
We also need several estimates for sums involving the function $w_k(q)$. We list those in the following lemma.

\begin{lemma} \label{lemma: some inequalities}
  Let $w_k(q)$ be the multiplicative function defined above. Then the following inequalities hold for any fixed $\varepsilon > 0$:
  \begin{equation}
    \sum_{n\sim N} \tau^{c}(n) w_k\left( \frac{q}{(q,n^j)} \right) \ll  q^\varepsilon (\log N)^C w_k(q) N  \quad  (1\leq j \leq k),
  \end{equation}
  where $\tau(q)$ is the divisor function and $C$ is a constant depending on $c$;
  \begin{equation}
    \sum_{\substack{n\sim N \\ (n,h)=1}} \tau^c(n) \tau^c(n+h) w_k\left( \frac{q}{(q,R(n,h))} \right) \ll  q^\varepsilon (\log N)^C w_k(q) N + q^\varepsilon,
  \end{equation}
  where $R(n,h) = \left( (n+h)^k - n^k \right)/h$.
\end{lemma}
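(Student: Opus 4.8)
The plan is to reduce both inequalities to a multiplicativity argument together with Shiu's bound (Lemma \ref{lemma: shiu}). For the first estimate, I would write $q = q_1 q_2$ where $q_1 = \prod_{p \mid (q, n^j)} p^{v_p(q)}$ and note that $w_k$ is multiplicative, so $w_k(q/(q,n^j)) = w_k(q)/w_k(q_1)$ up to a harmless bounded factor coming from the mixed-residue cases in the definition of $w_k$. Since $w_k(p^a) \asymp p^{-\lceil a/k\rceil}$ roughly, one has $1/w_k(q_1) \ll q_1^{1/k} \ll (q,n^j)^{1/k}$; more precisely I would record the pointwise bound $w_k(q)/w_k(q/(q,n^j)) \ll (q,n^j)^{1/k} \tau((q,n^j))^{c}$. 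Then the left side is
\[
  \ll w_k(q) \sum_{n \sim N} \tau^c(n) (q,n^j)^{1/k} \tau^c((q,n^j))
  \ll w_k(q) \sum_{d \mid q} d^{1/k} \tau^c(d) \sum_{\substack{n \sim N \\ d \mid n^j}} \tau^c(n).
\]
For each $d \mid q$ the inner sum over $n$ with $d \mid n^j$ is $\ll \tau(d)^j (N/\mathrm{rad}(d) + 1)(\log N)^{\tau_c^c - 1}$ by splitting into residue classes and applying Shiu; crucially the saving $d^{1/k}/\mathrm{rad}(d)^{\text{something}}$ combined with summing $\sum_{d\mid q} \tau^c(d) \ll q^\varepsilon$ absorbs everything into $q^\varepsilon (\log N)^C w_k(q) N$. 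The point is that the worst case $d \approx N$ still gives $d^{1/k} \ll N^{1/k} \ll$ negligible against $N$, so no loss of a power of $N$ occurs.

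For the second estimate the structure is the same but one works with $R(n,h) = ((n+h)^k - n^k)/h$, a polynomial of degree $k-1$ in $n$, and with two divisor weights $\tau^c(n)\tau^c(n+h)$. I would again pull out $w_k(q)$ via the pointwise bound $w_k(q)/w_k(q/(q,R)) \ll (q,R(n,h))^{1/k}\tau((q,R(n,h)))^c$, reducing to
\[
  \ll w_k(q) \sum_{d \mid q} d^{1/k}\tau^c(d) \sum_{\substack{n \sim N,\ (n,h)=1 \\ d \mid R(n,h)}} \tau^c(n)\tau^c(n+h).
\]
For fixed $d \mid q$ the congruence $d \mid R(n,h)$ confines $n$ to at most $\tau(d)^{k-1}$ (or $O(d^\varepsilon)$) residue classes mod $d$; on each such class Shiu's lemma (with modulus $d$, provided the class has length $> x^\varepsilon d$) bounds the double-divisor sum by $\ll \frac{N}{\varphi(d)}(\log N)^{C'}$. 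The ranges of $d$ for which $d > N x^{-\varepsilon}$ — where Shiu does not directly apply — are handled by the trivial estimate $\tau^c(n)\tau^c(n+h) \ll N^\varepsilon$ over at most $d^\varepsilon$ classes, each of length $\le N/d + 1$, contributing $\ll \sum_{d \mid q,\, d > N x^{-\varepsilon}} d^{1/k} d^\varepsilon (N/d + 1) N^\varepsilon \ll q^\varepsilon N^{1/k+\varepsilon} + q^{1/k+\varepsilon} \ll q^\varepsilon w_k(q) N + q^\varepsilon$, which is why the extra additive term $q^\varepsilon$ appears on the right-hand side (it accommodates the degenerate case $N$ small relative to $q$). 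Summing over $d \mid q$ and using $\sum_{d\mid q}\tau^c(d) d^{1/k}/\varphi(d) \ll q^\varepsilon$ finishes the bound.

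The main obstacle is the bookkeeping in the first step: proving the clean pointwise inequality $w_k(q)/w_k(q/(q,n^j)) \ll (q,n^j)^{1/k}\tau^c((q,n^j))$ requires a careful case analysis of the exponent $v_p(q) = ku+v$ against $v_p((q, n^j)) = \min(v_p(q), j\, v_p(n))$, since the definition of $w_k$ is piecewise in the residue $v \bmod k$ and is not simply $p^{-\lceil (ku+v)/k\rceil}$ on the nose (the $v=1$ case carries an extra factor $k$ and a half-integer exponent). One must check that in every case the ratio is dominated by $p^{v_p((q,n^j))/k}$ times a bounded power of the number of prime factors, uniformly in $j \le k$; once that pointwise estimate is in hand, the rest is a routine divisor-switching plus Shiu argument. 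I would isolate this pointwise bound as a short sublemma before the main computation.
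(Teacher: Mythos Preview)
The paper does not give a self-contained argument here: it simply cites Lemma~2.3 and inequality~(3.11) of Kawada--Wooley together with Shiu's theorem. Your outline is an attempt to reconstruct that argument, and the overall architecture (pull out $w_k(q)$, switch to a sum over divisors $d\mid q$, bound the resulting short divisor sums in progressions via Lemma~\ref{lemma: shiu}) is exactly right and matches what those references do.

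The genuine gap is the pointwise ``sublemma'' you intend to prove. The inequality
\[
  \frac{w_k\bigl(q/(q,n^j)\bigr)}{w_k(q)} \;\ll\; (q,n^j)^{1/k}\,\tau\bigl((q,n^j)\bigr)^{c}
\]
is \emph{false}. Take $k=3$, $q=p^2$, $j\ge 2$, $n=p$: then $(q,n^j)=p^2$, so the left side equals $w_k(1)/w_3(p^2)=1/p^{-1}=p$, whereas your bound gives $p^{2/3}\cdot 3^c$, which fails for large $p$. The error is in the heuristic ``$w_k(p^a)\asymp p^{-a/k}$'': in fact for $a=ku+v$ with $2\le v\le k$ one has $w_k(p^a)=p^{-u-1}=p^{-\lceil a/k\rceil}$, so the correct local bound is
\[
  \frac{w_k(p^{a-b})}{w_k(p^a)} \;\le\; k\,p^{\lceil b/k\rceil}
  \qquad(0\le b\le a),
\]
giving globally $w_k(q/d)/w_k(q)\le k^{\omega(d)}\prod_{p\mid d} p^{\lceil v_p(d)/k\rceil}$ rather than $d^{1/k}$. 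This is precisely what one needs: the congruence $d\mid n^j$ forces $d'\mid n$ with $d'=\prod_p p^{\lceil v_p(d)/j\rceil}$, and since $j\le k$ one has $\lceil v_p(d)/j\rceil\ge\lceil v_p(d)/k\rceil$, so the factor $\prod_p p^{\lceil v_p(d)/k\rceil}/d'\le 1$ cancels cleanly; the remaining sum $\sum_{d\mid q}k^{\omega(d)}\tau^c(d)\ll q^\varepsilon$ is immediate. With this correction both inequalities follow by your divisor-switching plus Shiu scheme, and the additive $q^\varepsilon$ in the second estimate arises exactly as you say from the degenerate large-$d$ range. But the case analysis you flag as ``the main obstacle'' cannot terminate in the bound you wrote down, because that bound does not hold.
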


\begin{proof}
  See Lemma 2.3 and inequality (3.11) in Kawada and Wooley \cite{kawada2001waring} and combine with the result in Lemma \ref{lemma: shiu}.
\end{proof}

\begin{lemma} \label{lemma: exponential sums in short intervals}
  Let $k\geq 3$ be an integer and $\gamma\geq 3$ be a real number. Let $0<\rho\leq \sigma_k/\gamma$, where $\sigma_k = \frac{1}{2k(k-1)}$. Suppose that $y\leq x,$ and $y \geq x^{\frac{\gamma}{2\gamma-\sigma_k-1}}$. Then either
  \begin{equation} \label{eqn: exponential sums in short intervals, minor arcs}
    \sum_{x < n \leq x+y} e\left(n^k \alpha\right) \ll y^{1-\rho +\varepsilon},
  \end{equation}
  or there exist integers $a$ and $q$ such that
  \begin{equation} \label{eqn: exponential sums in short intervals, conditions}
    1\leq q \leq y^{k\rho}, \quad (a,q)=1, \quad |q\alpha - a| \leq x^{1-k} y^{k\rho -1},
  \end{equation}
  and
  \begin{equation} \label{eqn: exponential sums in short intervals, major arcs}
    \sum_{x < n \leq x+y} e\left(n^k \alpha\right)  \ll   y^{1-\rho +\varepsilon} + \frac{w_k(q) y}{1 + yx^{k-1}|\alpha - a/q|}.
  \end{equation}
\end{lemma}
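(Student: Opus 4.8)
The plan is to split according to the Diophantine approximation to $\alpha$ furnished by Dirichlet's theorem, treating a ``minor'' case by the short--interval exponential sum method and a ``major'' case by sorting the summands into residue classes modulo $q$.

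First I would apply Dirichlet's theorem with the denominator bound $N=x^{k-1}y^{1-k\rho}$, which is $\ge 1$ since $k\rho\le\sigma_k<1$ and $y\le x$. This produces coprime integers $a,q$ with $1\le q\le N$ and $|q\alpha-a|\le N^{-1}=x^{1-k}y^{k\rho-1}$, so the size conditions on $|q\alpha-a|$ in \eqref{eqn: exponential sums in short intervals, conditions} hold automatically, and the only dichotomy left is whether $q\le y^{k\rho}$ or $q>y^{k\rho}$.

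If $q>y^{k\rho}$, I would prove the minor estimate \eqref{eqn: exponential sums in short intervals, minor arcs}. After the substitution $n=\lceil x\rceil+m$ the sum becomes $\sum_m e(g(m))$ over a range of length $y+O(1)$, with $g$ a polynomial of degree $k$ whose leading coefficient is $\alpha$; here one invokes the short--interval Weyl/van der Corput machinery of Daemen, performing a $\gamma$-fold differencing (van der Corput $A$-process) over the short range and exploiting that on $(x,x+y]$ the $j$th derivative of $g$ has size $\asymp|\alpha|\,x^{k-j}$, and then using $q>y^{k\rho}$ to control the resulting spacing sums. The hypothesis $y\ge x^{\gamma/(2\gamma-\sigma_k-1)}$ is precisely the threshold at which this process converts the trivial bound $\ll y$ into $\ll y^{1-\rho+\varepsilon}$ with $\rho\le\sigma_k/\gamma$. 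I expect this to be the principal obstacle: the modulus $q$ may be as large as $x^{k-1}$, hence much larger than $y^{k-1}$, so a single application of Weyl's inequality over the short interval cannot use $q$ effectively, and one must carefully balance the number of differencing steps against the losses inherent to short ranges --- this balancing is exactly what the parameter $\gamma$ and the exponent $\sigma_k=1/(2k(k-1))$ encode.

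If $q\le y^{k\rho}$, put $\beta=\alpha-a/q$, so $|\beta|\le x^{1-k}y^{k\rho-1}/q$; in particular $x^{k-1}|\beta|\le y^{k\rho-1}/q<1/q$ since $k\rho<1$. Writing $n\equiv s\pmod q$ and using $(qr+s)^k\equiv s^k\pmod q$,
\[
  \sum_{x<n\le x+y}e(n^k\alpha)=\sum_{s\bmod q}e\!\left(\frac{as^k}{q}\right)\sum_{\substack{x<n\le x+y\\ n\equiv s\,(q)}}e(n^k\beta).
\]
I would apply Poisson summation to each inner sum, smoothing the cut-off at scale $y^{k\rho+\varepsilon}$: a frequency $h\ne 0$ contributes a phase derivative of size $\asymp|h|/q\ge 1/q>x^{k-1}|\beta|$, so the $t^k\beta$ term is negligible by comparison and repeated integration by parts makes every nonzero frequency contribute $\ll y^{-B}$ for any $B$, while the smoothing costs $\ll y^{k\rho+\varepsilon}\ll y^{1-\rho+\varepsilon}$. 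What remains is the $h=0$ term $q^{-1}S(q,a)\int_x^{x+y}e(t^k\beta)\,dt$. The first--derivative estimate for exponential integrals gives $\int_x^{x+y}e(t^k\beta)\,dt\ll y/(1+yx^{k-1}|\beta|)$, and $S(q,a)\ll q\,w_k(q)$ by \eqref{eqn: S(q,a)}; inserting these yields \eqref{eqn: exponential sums in short intervals, major arcs} and, together with the previous case, the lemma.
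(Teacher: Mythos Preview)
Your overall two--case split is the right shape, but the minor arc step has a real gap. You take the Dirichlet cut at $q>y^{k\rho}$ and assert that Daemen's Weyl--type argument then yields \eqref{eqn: exponential sums in short intervals, minor arcs}. It does not: after shifting to a degree-$k$ polynomial sum of length $\le y$, Daemen's method saves a factor of size $P_0^{\sigma_k}$ where $P_0$ is the lower threshold for $q$ (and one also needs the complementary upper bound $q\le Q_0=x^{k-2}y^2/P_0$). With your threshold $P_0=y^{k\rho}$ the saving is only $y^{k\rho\sigma_k}=y^{\rho/(2(k-1))}$, far short of the $y^{\rho}$ you need; and at the other end your $q$ may reach $x^{k-1}y^{1-k\rho}$, which exceeds any admissible $Q_0$. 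The description ``$\gamma$-fold differencing'' is also off: $\gamma$ is a real parameter governing the size of $P_0$, not a count of $A$-process steps.

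The paper avoids this by choosing $P_0=y^{1/\gamma}$ and $Q_0=x^{k-2}y^2/P_0$. For $q>P_0$ Daemen's Weyl bound gives exactly $yP_0^{-\sigma_k+\varepsilon}\le y^{1-\rho+\varepsilon}$. For $q\le P_0$ one appeals to Daemen's explicit major arc expansion, whose error term is $\Delta\ll P_0^{1+\varepsilon}x/y$; it is \emph{here} (not in the minor arc, as you wrote) that the hypothesis $y\ge x^{\gamma/(2\gamma-\sigma_k-1)}$ is used, to force $\Delta\ll y^{1-\rho+\varepsilon}$. Finally, the intermediate range $y^{k\rho}<q\le P_0$ (and the case $|q\alpha-a|>x^{1-k}y^{k\rho-1}$) is not handled by a separate Weyl argument at all: one simply observes that when \eqref{eqn: exponential sums in short intervals, conditions} fails, the major arc bound \eqref{eqn: exponential sums in short intervals, major arcs} already implies \eqref{eqn: exponential sums in short intervals, minor arcs}, since $w_k(q)\ll q^{-1/k}$ and the denominator $1+yx^{k-1}|\alpha-a/q|$ absorb the excess. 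Your Poisson approach to the main term on the major side is reasonable, but it will not, by itself, repair the minor arc; you need the larger cut $P_0=y^{1/\gamma}$ and the concluding observation just described.
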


\begin{proof}
  Take
  \begin{equation*}
    P_0 = y^{1/\gamma}, \quad  Q_0 = x^{k-2}y^2/P_0.
  \end{equation*}
  By Dirichlet's lemma on rational approximations, there exists integers $a$ and $q$ with
  \begin{equation} \label{eqn: conditions of a and q}
    1\leq q\leq Q_0, \quad  (a,q)=1, \quad  |q\alpha-a|\leq 1/Q_0.
  \end{equation}
  When $q>P_0$, we rewrite the sum on the left of (\ref{eqn: exponential sums in short intervals, minor arcs}) as
  \begin{equation*}
    \sum_{1\leq n\leq z} e(\alpha_k n^k + \alpha_{k-1} n^{k-1} + \cdots + \alpha_0),
  \end{equation*}
  where $z\leq y$ and $\alpha_j = \binom{k}{j} \alpha u^{k-j}$, with $u$ a fixed integer.
  Hence, it follows from the argument underlying the proof of \cite[equation (3.5)]{daemen2010asymptotic} and \cite[equation (4.23)]{wei2014sums} that
  \begin{equation}
    \sum_{x < n \leq x+y} e\left(n^k \alpha\right) \ll y P_0^{-\frac{1}{2k(k-1)}+\varepsilon} \ll y^{1-\rho +\varepsilon}.
  \end{equation}
  When $q\leq P_0$, from (\ref{eqn: S(q,a)}) and \cite[equations (5.1)-(5.5) and \S 6]{daemen2010asymptotic}, we deduce
  \begin{equation*}
    \sum_{1\leq n\leq y} e(\alpha_k n^k + \alpha_{k-1} n^{k-1} + \cdots + \alpha_0) \ll \frac{w_k(q) y}{1 + yx^{k-1}|\alpha - a/q|} + \Delta,
  \end{equation*}
  where
  \begin{equation*}
    \Delta \ll P_0^{1/2+\varepsilon} \left( 1+ \frac{P_0 x^k}{x^{k-2} y^2} \right)^{1/2} \ll P_0^{1+\varepsilon} x/y \ll y^{1-\rho +\varepsilon},
  \end{equation*}
  provided that $y \geq x^{\frac{\gamma}{2\gamma-\sigma_k-1}}$. Thus, at least one of (\ref{eqn: exponential sums in short intervals, minor arcs}) and (\ref{eqn: exponential sums in short intervals, major arcs}) holds.
  The lemma follows on noting that when conditions (\ref{eqn: exponential sums in short intervals, conditions}) fail, inequality (\ref{eqn: exponential sums in short intervals, minor arcs}) follows from (\ref{eqn: exponential sums in short intervals, major arcs}).
\end{proof}

\section{The case $\alpha \in \mathcal{A}$} \label{sec: A}

Let
\begin{equation}
  S_k(\chi)=\sum\limits_{x_1<m \leq x_1+y_1 \atop (m,q)=1}\mu(m)\chi(m)e(m^kd^k\lambda),
\end{equation}
where $\chi=\chi_{l}$ is a primitive character, $ld|q$, $x_1=x/d$ and $y_1=y/d.$
By Lemma \ref{lemma: S_k(chi)}, in order to prove Propositions \ref{prop: A} and \ref{prop: B} it is sufficient to establish that
\begin{equation} \label{eqn: S_k(chi)}
  S_k(\chi) \ll q^{-1} y L^{-A}.
\end{equation}

It follows from the main theorem in Ramachandra \cite{ramachandra1976some} that
\begin{equation} \label{eqn: mobius sum in short intervals}
  \sum_{x_1<m\leq u \atop (m,q)=1} \mu(m)\chi(m)  \ll  (u-x_1) \exp(-c(\log x_1)^{1/6})
\end{equation}
holds for $x_1^{7/12+\varepsilon} \leq u-x_1 \leq y_1$.
Hence for $\alpha\in\mathcal{A}$, (\ref{eqn: S_k(chi)}) can be proved by the partial summation formula.
Recall that $|\lambda| \leq \frac{1}{R}$, we have
\begin{equation*}
  \begin{split}
    S_k(\chi) & = \int_{x_1}^{x_1+y_1} e(\lambda d^k u^k)  d \left( \sum\limits_{x_1<m \leq u \atop (m,q)=1} \mu(m)\chi(m) \right) \\
              & \ll y_1 \exp(-c(\log x_1)^{1/6}) + \int_{x_1}^{x_1+x_1^{7/12+\varepsilon}} (u-x_1) |\lambda| u^{k-1} du \\
              & \qquad + \int_{x_1+x_1^{7/12+\varepsilon}}^{x_1+y_1} (u-x_1)\exp(-c(\log x_1)^{1/6})  |\lambda| u^{k-1} du \\
              & \ll x^{7/12+\varepsilon} + y \exp(-c' L^{1/6}) \ll q^{-1} y L^{-A}.
  \end{split}
\end{equation*}
This proves Proposition \ref{prop: A}.

\section{The case $\alpha \in \mathcal{B}$} \label{sec: B}

Recall that for $\alpha \in  \mathcal{B}$, we have $\frac{1}{R} < |\lambda| \leq \frac{1}{qQ}$.
We start with Perron's summation formula (see \cite[\S 5, Theorem 1]{karatsuba1993basic}). For $x_1<u<2x_1$
\begin{equation} \label{eqn: Perron's formula}
  \sum_{x_1<m\leq u \atop (m,q)=1} \mu(m)\chi(m) = \frac{1}{2\pi i} \int_{b_0-iT}^{b_0+iT} F(s,\chi)\frac{u^s-x^s_1}{s}ds + O\left(\frac{x_1 L}{T}\right),
\end{equation}
where $b_0 = 1+\frac{1}{L}$ and
\begin{equation}
  F(s,\chi)=\sum\limits_{m=1 \atop (m,q)=1}^{\infty} \frac{\mu(m)\chi(m)}{m^{s}}, \quad  \textrm{Re}(s)>1.
\end{equation}
Let $H(s,\chi)=\prod\limits_{p|q} \left(1-\frac{\chi(p)}{p^s}\right)^{-1}$, 
then $F(s,\chi)=H(s,\chi)/L(s,\chi).$
Moreover, let $\rho(q)$ be defined by
\begin{equation}
  \rho(q) = \prod_{p|q} \left( 1+ \frac{1}{\sqrt{p}} \right).
\end{equation}
Then we have
\begin{equation*}
  F(\sigma+it,\chi)  \ll \frac{1}{|L(\sigma+it,\chi)|} \prod\limits_{p|q} \left( 1-\frac{1}{\sqrt{p}} \right)^{-1}  \ll  \frac{L\rho(q)}{|L(\sigma+it,\chi)|}
\end{equation*}
for $\textrm{Re}(s)=\sigma> \frac{1}{2}$.

Let $M$ be the so-called \emph{Hooley--Huxley contour} as described
by Ramachandra \cite{ramachandra1976some}. Briefly speaking, we take the rectangle
\begin{equation*}
  \frac{1}{2}\leq \sigma \leq 1, \quad |t|\leq T+2000(\log T)^2,
\end{equation*}
and divide it into equal rectangles of hight $400(\log T)^2$
(the real line cuts one of these rectangles into two equal parts,
we denote this rectangle by $R_0$).
Let $R_n \ (n=-n_1, \cdots, n_1)$ be all these rectangles.
In $R_n$, we fix a new right-hand side and obtain a new rectangle as follows.
Consider $R_{n-1},R_n$, and $R_{n+1}$ whenever all of the three are defined.
Pick out a zero of $L(s,\chi)$ in $R_{n-1}\cup R_n \cup R_{n+1}$ 
with the greatest real part $\beta_n$ and $\textrm{Re}(s)=\beta_n$ is the new right-hand side of $R_n$.
Now we join all the right edges of the new rectangles by horizontal lines. These form the contour $M'$.

The Hooley--Huxley contour is obtained by making the following changes on $M'$.
Let $a$, $b$, and $\vartheta$ be positive constant to be chosen later, satisfying $0<\vartheta <1$, $a$ should be small and $b$ should be close to 1. If $\beta_n <\vartheta$, then in place of $\beta_n$ we take $\beta'_n=\beta_n+3a(1-\beta_n)$. If $\beta_n \geq \vartheta$, then $\beta_n$ is replaced by $\beta'_n=\beta_n+b(1-\beta_n)$. These form the Hooley--Huxley contour.

Now we join the points $b_0 \pm iT$ to $M$ by horizontal lines $H_1$ and $H_2$.
The parameter $T$ will be chosen as a suitable power of $x$. Since
$$
  \frac{1}{|L(s,\chi)|}\ll T^\varepsilon
$$
for $s$ on $H_1$ and $H_2$ as shown in \cite{ramachandra1976some},
shifting the integral line in (\ref{eqn: Perron's formula}) to $M$ we obtain
\begin{equation} \label{eqn: Perron M}
  \sum_{x_1<m\leq u \atop (m,q)=1}\mu(m)\chi(m)
  = \frac{1}{2\pi i} \int_{M} F(s,\chi)\frac{u^s-x^s_1}{s}ds + O\left(\frac{x_1 L}{T^{1-\varepsilon}}\right).
\end{equation}
Therefore
\begin{equation}
  \begin{split}
    S_k(\chi) & = \int_{x_1}^{x_1+y_1}e(\lambda d^ku^k)d\sum_{x_1< m \leq u \atop (m,q)=1}\mu(m)\chi(m)  \\
              & = \frac{1}{2\pi i} \int_{x_1}^{x_1+y_1} e(\lambda d^ku^k)du  \int_M F(s,\chi)u^{s-1}ds
                   + O\left( \frac{1+|\lambda|x^{k-1}y}{T^{1-\varepsilon}} x_1 L \right).
  \end{split}
\end{equation}
Taking
\begin{equation} \label{eqn: T}
  T^{1-\varepsilon} = (1 + |\lambda| x^{k-1}y) q x y^{-1} L^{A+1},
\end{equation}
we have
\begin{equation}
    S_k(\chi) = \frac{1}{2\pi i} \int_M F(s,\chi)ds \int_{x_1}^{x_1+y_1} u^{s-1} e(\lambda d^ku^k) du +  O\left( q^{-1} y L^{-A} \right).
\end{equation}

Let
\begin{equation*}
  \begin{split}
    I & := \int_{x_1}^{x_1+y_1} u^{s-1} e(\lambda d^ku^k) du\\
      & = \int_{x_1}^{x_1+y_1} u^{\sigma-1} e\left(\lambda d^ku^k+\frac{t}{2k\pi}\log u^k\right) du\\
      & = \frac{1}{k} \int_{x_1^k}^{(x_1+y_1)^k} v^{\frac{\sigma}{k}-1} e\left(\lambda d^k v + \frac{t}{2k\pi}\log v \right) dv.
  \end{split}
\end{equation*}
Let $\mathcal{V}$ denote the interval $[x_1^k,(x_1+y_1)^k]$, and
\begin{equation*}
  f(v) = \lambda d^kv + \frac{t}{2k\pi} \log v, \quad  v\in \mathcal{V}.
\end{equation*}
Then we have
\begin{equation*}
  \begin{split}
    f'(v)  & = \lambda d^k + \frac{t}{2k\pi v} \gg \frac{\min\limits_{v\in \mathcal{V}} |t + 2k\pi \lambda d^k v|}{x_1^k}, \\
    f''(v) & = -\frac{t}{2k\pi v^2} \gg \frac{|t|}{x_1^{2k}}.
  \end{split}
\end{equation*}
Hence, we have (see Titchmarsh \cite[Lemmas 4.3 and 4.4]{titchmarsh1986theory})
\begin{equation*}
  \begin{split}
    I & \ll x_1^{\sigma - k} \min\left( x_1^{k-1}y_1, \frac{x_1^{k}}{\min\limits_{v\in \mathcal{V}} |t + 2k\pi \lambda d^k v|}, \frac{x_1^k}{\sqrt{|t|}} \right) \\
      & = x_1^{\sigma -1} \min\left( y_1, \frac{x_1}{\min\limits_{v\in \mathcal{V}} |t + 2k\pi \lambda d^k v|}, \frac{x_1}{\sqrt{|t|}} \right).
  \end{split}
\end{equation*}
Therefore
\begin{equation}
  S_k(\chi) \ll  \int_M  \min\left( y_1, \frac{x_1}{\min\limits_{v\in \mathcal{V}} |t + 2k\pi \lambda d^k v|}, \frac{x_1}{\sqrt{|t|}} \right) x_1^{\sigma -1} |F(s,\chi)| |ds|  +   q^{-1} y L^{-A}.
\end{equation}

Take
\begin{equation} \label{eqn: H}
  H = \frac{x}{y} + 2^{k+2} k\pi |\lambda| x^{k-1}y.
\end{equation}
For $|t + 2k\pi \lambda x^k| \leq H$, since $\frac{1}{R} < |\lambda| \leq \frac{1}{qQ}$, we have
\begin{equation*}
  \min\left( y_1, \frac{x_1}{\sqrt{|t|}} \right) \ll \min\left( y_1, \frac{x_1}{\sqrt{|\lambda|x^k}} \right)
\end{equation*}
holds. And the inequality
\begin{equation*}
  |t + 2k\pi \lambda x^k| \geq jH, \quad  j\geq 1
\end{equation*}
ensures that, for $v\in \mathcal{V}$,
\begin{equation}
  |t + 2k\pi \lambda d^k v| \geq jH - 2k\pi |\lambda| ((x+y)^k - x^k) \geq \frac{1}{2} jH.
\end{equation}
Then we have
\begin{equation*}
  \begin{split}
        & \int_{M}  \min\left( y_1, \frac{x_1}{\min\limits_{v\in \mathcal{V}} |t + 2k\pi \lambda d^k v|}, \frac{x_1}{\sqrt{|t|}} \right) x_1^{\sigma -1} |F(s,\chi)| |ds| \\
    \ll & \int_{|t + 2k\pi \lambda x^k| \leq H}  \min\left( y_1, \frac{x_1}{\sqrt{|t|}} \right) x_1^{\sigma -1} |F(s,\chi)| |ds|  \\
        & + \sum_{\substack{j\geq 1 \\ jH\leq 2T}}  \int_{jH\leq |t + 2k\pi \lambda x^k| \leq (j+1)H}  \frac{x_1}{\min\limits_{v\in \mathcal{V}} |t + 2k\pi \lambda d^k v|} x_1^{\sigma -1} |F(s,\chi)| |ds| \\
    \ll & L \max_{|T_1|\leq 2T} \int_{T_1}^{T_1+H} \left( \min\left( y_1, \frac{x_1}{\sqrt{|\lambda|x^k}} \right) + \frac{x_1}{H} \right) x_1^{\sigma -1} |F(s,\chi)| |ds|.
  \end{split}
\end{equation*}
For $\alpha\in \mathcal{B}$, by (\ref{eqn: H}), it is a simple matter to show that
\begin{equation*}
  \min\left( y_1, \frac{x_1}{\sqrt{|\lambda|x^k}} \right) + \frac{x_1}{H}
  \ll y_1 + \frac{x_1}{H}
  \ll \sqrt{\frac{x_1y_1}{H}} L^{c_1} = \frac{1}{d} \sqrt{\frac{xy}{H}} L^{c_1}.
\end{equation*}
Let $M(H)$ denote the part of $M$ satisfying
\begin{equation*}
  T_1\leq \Im(s) \leq T_1+H, \ |T_1|\leq 2T.
\end{equation*}
Then
\begin{equation*}
\begin{split}
 S_k(\chi) & \ll L^{c_1+1} \sqrt{\frac{xy}{H}} \max_{|T_1|\leq 2T} \int_{M(H)} x^{\sigma-1} |F(s,\chi)||ds| +  q^{-1} y L^{-A}\\
           & \ll L^{c_1+2} \rho(q) \sqrt{\frac{xy}{H}} \max_{|T_1|\leq 2T} \int_{M(H)} x^{\sigma-1} |L(s,\chi)|^{-1} |ds| +  q^{-1} y L^{-A}.
\end{split}
\end{equation*}
Since $H\geq xy^{-1}$, we have $\sqrt{\frac{xy}{H}} \leq y$.
To prove Proposition \ref{prop: B}, now it is sufficient to show that for $|T_1|\leq 2T$,
\begin{equation} \label{eqn: int M(H)}
  \int_{M(H)} x^{\sigma-1} |L(s,\chi)|^{-1} |ds|  \ll  L^{-A-2c_1-2}.
\end{equation}

To prove (\ref{eqn: int M(H)}), we just follow the method of Ramachandra \cite{ramachandra1976some}.
It is shown in \cite[Lemma 5]{ramachandra1976some} that
\begin{eqnarray*}
  && |L(s,\chi)|^{-1} \ll T^\varepsilon, \ \textrm{if} \ s\in M(H) \ \textrm{and} \ \Re(s)\leq \vartheta+b(1-\vartheta), \\
  && |L(s,\chi)|^{-1} \ll \textrm{exp}((\log T)^{3(1-b)}), \ \textrm{if} \ s\in M(H) \ \textrm{and} \ \Re(s)> \vartheta+b(1-\vartheta).
\end{eqnarray*}
We divide the smallest vertical strip containing $M(H)$ into vertical strips of width $1/\log T$.
Consider the bits of $M(H)$, say $M(H,\sigma')$, in the vertical strip about the abscissa $\sigma'$.
Then by the construction of the Hooley--Huxley contour, we have 
$$
  \int_{M(H,\sigma')} |ds|  \ll  N(\sigma',T_1,H,\chi)(\log T)^{10},
$$
where $\sigma'$ is $\sigma+3a(1-\sigma)$ or $\sigma+b(1-\sigma)$ according as $\sigma' \leq \vartheta$ or $\sigma' > \vartheta$.
By the above discussion and Lemmas \ref{lemma: zero density for alpha near 1}-\ref{lemma: zero free region}, we obtain
\begin{equation*}
  \begin{split}
    & \int_{M(H)} x^{\sigma-1} |L(s,\chi)|^{-1} |ds|    \\
  = & \int_{M(H) \atop \sigma'<\vartheta}x^{\sigma'-1} |L(s,\chi)|^{-1} |ds| + \int_{M(H)\atop \vartheta\leq \sigma' \leq \vartheta+b(1-\vartheta)}x^{\sigma'-1} |L(s,\chi)|^{-1} |ds|   \\
    & + \int_{M(H)\atop \sigma' > \vartheta+b(1-\vartheta)} x^{\sigma'-1} |L(s,\chi)|^{-1} |ds|   \qquad  (s=\sigma'+it)  \\
  \ll &  T^\varepsilon \bigg(\frac{H^{\frac{8}{3}(1-3a)^{-1}}}{x}\bigg)^{(1-\vartheta)} + T^\varepsilon \bigg(\frac{T^{167(1-\vartheta)^{\frac{1}{2}}(1-b)^{-\frac{3}{2}}}}{x}\bigg)^{(1-b)(1-\vartheta)}  \\
    & + \exp\left((\log T)^{3(1-b)}\right) \bigg(\frac{T^{167(1-\vartheta)^{\frac{1}{2}}(1-b)^{-\frac{3}{2}}}}{x}\bigg)^{c_0(\log T)^{-4/5}}
  \end{split}
\end{equation*}
provided $a$, $b$, and $\vartheta$ satisfy
\begin{equation}\label{eqn: a}
  H^{\frac{8}{3}\left(1-3a\right)^{-1}}  \leq  x^{1-\varepsilon},
\end{equation}
\begin{equation}\label{eqn: vartheta}
  T^{400(1-\vartheta)^{\frac{1}{2}}(1-b)^{-\frac{3}{2}}}  \leq  x.
\end{equation}
In fact, we may first choose $a$ such that
$$
  \frac{8}{3} \left(\frac{1}{3}+\varepsilon\right) \frac{1}{1-3a}  <  1-\varepsilon \ \quad   (H\leq x^{\frac{1}{3}+\varepsilon}),
$$
$b$ such that $3(1-b)=\frac{1}{100}$ and then $\vartheta$ such that (\ref{eqn: vartheta}) holds.
Hence
$$
  \int_{M(H)} x^{\sigma-1} |L(s,\chi)|^{-1} |ds|  \ll  \exp(-c'_0L^{\frac{1}{6}})\ \quad  (c'_0>0)
$$
and (\ref{eqn: int M(H)}) follows. Thus, we prove Proposition \ref{prop: B}.

\section{The case $\alpha \in \mathcal{C}$} \label{sec: C}

\subsection{Type I estimate}
Recall that
\begin{equation*}
  y = x^\theta, \quad  L = \log x.
\end{equation*}
The following lemma treats the exponential sums of type I which is an improvement of \cite[Lemma 8]{huang2015exponential}.

\begin{lemma} \label{lemma: Type I}
  Let $k\geq 3$ be an integer and $\gamma\geq 3$ be a real number. Let $0 < \rho < \sigma_k /(2\gamma)$, with $\sigma_k= \frac{1}{2k(k-1)}$.
  Suppose that $\alpha \in \mathcal{C}$ and $a(m) \ll \tau^c(m)$. Define
  \begin{equation*}
    \mathcal{T}_1 = \sum_{m    \sim M}  a(m) \sum_{x<mn\leq x+y} e\left((mn)^k \alpha\right).
  \end{equation*}
  Then for any $A>0$, we have
  \begin{equation*}
    \mathcal{T}_1 \ll y L^{-A}, 
  \end{equation*}
  provided that
  \begin{equation} \label{eqn: conditions of M. type I}
    M \ll y \left( \frac{y}{x} \right)^{\frac{\gamma}{\gamma-\sigma_k-1}}, \quad
    M \ll y x^{-\gamma\rho/\sigma_k}, \quad
    M^{2k} \ll y x^{k-1-2k\rho},
  \end{equation}
  and
  \begin{equation} \label{eqn: c_1. I}
    c_1 > (k+1)(A+C),
  \end{equation}
  where $C$ is a constant depending on $c$.
\end{lemma}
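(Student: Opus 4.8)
The plan is to fix $m\sim M$, estimate the inner sum over $n$ by Lemma~\ref{lemma: exponential sums in short intervals}, and then sum over $m$, following Kumchev's treatment of Type~I sums. For fixed $m$ the inner sum is the complete Weyl sum $\sum_{x/m<n\le(x+y)/m}e\big(n^{k}(m^{k}\alpha)\big)$ over an interval of length $y/m$ about $x/m$. Apply Lemma~\ref{lemma: exponential sums in short intervals} with $X=x/m$, $Y=y/m$ and the given $\gamma,\rho$; its hypothesis $Y\ge X^{\gamma/(2\gamma-\sigma_{k}-1)}$ holds uniformly for $m\sim M$ once one unwinds the first condition in (\ref{eqn: conditions of M. type I}), using that $t/(1-t)=\gamma/(\gamma-\sigma_{k}-1)$ for $t=\gamma/(2\gamma-\sigma_{k}-1)$. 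This yields, for each $m$: \emph{either} the inner sum is $\ll(y/m)^{1-\rho+\varepsilon}$, \emph{or} there are coprime $a_{m},q_{m}$ with $q_{m}\le(y/m)^{k\rho}$, $|m^{k}\alpha-a_{m}/q_{m}|\le q_{m}^{-1}(x/m)^{1-k}(y/m)^{k\rho-1}$, and the inner sum is
\[
  \ll (y/m)^{1-\rho+\varepsilon}+\frac{w_{k}(q_{m})\,(y/m)}{1+(y/m)(x/m)^{k-1}|m^{k}\alpha-a_{m}/q_{m}|}.
\]

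\textbf{Minor $m$.} Summing the first alternative against $|a(m)|\ll\tau^{c}(m)$ and using Lemma~\ref{lemma: shiu} gives a total $\ll L^{C}M^{\rho}y^{1-\rho+\varepsilon}\ll y\,(M/y)^{\rho}x^{\varepsilon}L^{C}$, which is $\ll yL^{-A}$ since $M/y\ll x^{-\gamma\rho/\sigma_{k}}$ by the second condition in (\ref{eqn: conditions of M. type I}) and $\varepsilon$ is small.

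\textbf{Major $m$, compatible case.} Write $\alpha=a/q+\lambda$ with $P<q\le Q$ and $|\lambda|\le1/(qQ)$ as in the definition of $\mathcal C$. Then $m^{k}\alpha=a_{0}/q_{0}+m^{k}\lambda$ with $q_{0}:=q/(q,m^{k})$, $(a_{0},q_{0})=1$, and $|m^{k}\lambda|\le M^{k}/(qQ)$, which the third condition in (\ref{eqn: conditions of M. type I}) renders negligibly small. For the $m$ whose approximation above is $a_{m}/q_{m}=a_{0}/q_{0}$ (i.e.\ $q_{m}=q_{0}$) one has $|m^{k}\alpha-a_{m}/q_{m}|=|m^{k}\lambda|$, so $(y/m)(x/m)^{k-1}|m^{k}\lambda|=yx^{k-1}|\lambda|$ does not depend on $m$, and their contribution is
\[
  \ll\frac{y}{1+yx^{k-1}|\lambda|}\sum_{m\sim M}\tau^{c}(m)\,m^{-1}w_{k}\!\Big(\frac{q}{(q,m^{k})}\Big)\ll\frac{y}{1+yx^{k-1}|\lambda|}\,q^{\varepsilon}L^{C}w_{k}(q)
\]
by Lemma~\ref{lemma: some inequalities} (with $j=k$) and partial summation. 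As $q>P=L^{c_{1}}$ and $w_{k}(q)\ll q^{-1/k}$, we get $q^{\varepsilon}w_{k}(q)\ll q^{\varepsilon-1/k}\ll P^{\varepsilon-1/k}=L^{-c_{1}(1/k-\varepsilon)}\ll L^{-A-C}$ by (\ref{eqn: c_1. I}) with $\varepsilon$ small, so this contribution is $\ll yL^{-A}$.

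\textbf{Major $m$, incompatible case.} It remains to handle the $m$ for which $a_{m}/q_{m}\ne a_{0}/q_{0}$. Since $|m^{k}\lambda|$ is negligible, such $m$ must have $q_{0}=q/(q,m^{k})>(y/m)^{k\rho}$ with $m^{k}\alpha$ closer to the smaller-denominator rational $a_{m}/q_{m}$; comparing $|a_{m}/q_{m}-a_{0}/q_{0}|\ge1/(q_{m}q_{0})$ with the bound on $|m^{k}\alpha-a_{m}/q_{m}|$ then forces $q\gg x^{k-1}y^{1-k\rho}M^{-k(1-\rho)}$, which together with $q\le Q$ already empties this case unless $M$ is large. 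For the residual $m$ one must estimate $\sum\tau^{c}(m)\,w_{k}(q_{m})q_{m}q_{0}m^{k-1}x^{1-k}$ (bounding the denominator below by $1+(y/m)(x/m)^{k-1}/(2q_{m}q_{0})$), and following Kumchev this is carried out by counting the $m\sim M$ for which $\|q_{m}m^{k}\alpha\|$ is as small as required — an equidistribution/divisor estimate for $m^{k}\alpha\bmod1$ — and checking the total is $\ll yL^{-A}$. I expect this last case to be the main obstacle: the hypothesis $M^{2k}\ll yx^{k-1-2k\rho}$ and the sharpened range $\rho<\sigma_{k}/(2\gamma)$ in (\ref{eqn: conditions of M. type I}) are precisely what this bookkeeping requires. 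Combining the three contributions gives $\mathcal T_{1}\ll yL^{-A}$.
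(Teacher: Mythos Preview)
Your minor-$m$ treatment and your compatible major-$m$ case are fine, but the argument has a genuine gap: the ``incompatible case'' is not proved, only gestured at. You write that ``following Kumchev this is carried out by counting the $m\sim M$ for which $\|q_m m^k\alpha\|$ is as small as required'' and that you ``expect this last case to be the main obstacle,'' but you do not actually carry out any such count, and nothing in the stated hypotheses obviously feeds such an equidistribution argument. As you yourself observe, the inequality $q\gg x^{k-1}y^{1-k\rho}M^{-k(1-\rho)}$ that you derive does not empty the case for all $M$ in the admissible range.

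The paper avoids this obstacle entirely, and the device is worth knowing. Instead of comparing the local approximation $a_m/q_m$ to the \emph{given} approximation $a/q$ from $\mathcal C$ (whose denominator $q$ may be as large as $Q$), one first takes a \emph{fresh} Dirichlet approximation $b/r$ to $\alpha$ itself with
\[
  1\le r\le x^{-k\rho}YX^{k-1},\qquad (b,r)=1,\qquad |r\alpha-b|\le x^{k\rho}Y^{-1}X^{1-k},
\]
(where $X=x/m$, $Y=y/m$). With this choice one checks
\[
  |a_m r - b m^k q_m|\;\le\; r\cdot X^{1-k}Y^{k\rho-1}\;+\;q_m m^k\cdot x^{k\rho}Y^{-1}X^{1-k}\;<\;1,
\]
the last step being precisely where the third hypothesis $M^{2k}\ll yx^{k-1-2k\rho}$ is used (and where it helps to apply Lemma~\ref{lemma: exponential sums in short intervals} with a slightly smaller parameter $\nu$ defined by $Y^{\nu}=x^{\rho}L^{-1}$, so the bound picks up an $L^{-k}$). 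Hence $a_m/q_m=m^k b/r$ and $q_m=r/(r,m^k)$ for \emph{every} major $m$: there is no incompatible case. Summing via Lemma~\ref{lemma: some inequalities} then gives
\[
  T_1(\alpha)\ \ll\ \frac{w_k(r)\,r^{\varepsilon}L^{C}\,y}{1+yx^{k-1}|\alpha-b/r|}.
\]
Only now does one compare $b/r$ with $a/q$: if $r>L^{(k+1)(A+C)}$ or $|r\alpha-b|>y^{-1}x^{1-k}L^{(k+1)(A+C)}$ the fraction is already $\ll yL^{-A}$; otherwise a second integer-forcing $|ra-bq|<1$ (using $q\le Q$ and the definition of $Q$) gives $b/r=a/q$, and then $q>P=L^{c_1}$ with $c_1>(k+1)(A+C)$ finishes. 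The moral is that the intermediate approximation $b/r$, chosen with the right denominator scale, makes the compatibility automatic and reduces your ``incompatible case'' to a vacuous one.
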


\begin{proof}
  Set \[  S_m = \sum_{X < n \leq X+Y} e\left( m^k n^k \alpha \right), \]
  where $X = x/m, Y =y/m$ with $m\sim M$.
  Define $\nu$ by $Y^\nu = x^\rho L^{-1}$. Note that, by (\ref{eqn: conditions of M. type I}), we have
  $$ \nu < \sigma_k/\gamma. $$
  We denote by $\mathcal{M}$ the set of integers $m \sim M$, for which there exist integers $b_1$ and $r_1$ with
  \begin{equation} \label{eqn: conditions of r_1 and b_1}
    1 \leq r_1 \leq Y^{k\nu},\quad (b_1,r_1)=1,\quad  |r_1 m^k \alpha - b_1| \leq X^{1-k} Y^{k\nu -1}.
  \end{equation}
  By (\ref{eqn: conditions of M. type I}), we have $Y \gg X^{\gamma/(2\gamma-\sigma_k-1)}$.
  We apply Lemma \ref{lemma: exponential sums in short intervals} to the summation over $n$ and get
  \begin{equation*}
    S_m \ll  Y^{1-\nu+\varepsilon} +  \frac{w_k(r_1) Y}{1+ YX^{k-1}|m^k\alpha- b_1/r_1|},
  \end{equation*}
  for $m \in \mathcal{M}$.
  Consequently,
  \begin{equation*}
    \begin{split}
      \mathcal{T}_1 &\ll  \sum_{m\sim M} a(m) Y^{1-\nu+\varepsilon}  + \sum_{m\in \mathcal{M}} \frac{a(m) w_k(r_1) Y}{1+ YX^{k-1}|m^k\alpha- b_1/r_1|}\\
                    &\ll  x^{\theta-\rho +\varepsilon} + T_1(\alpha),
     \end{split}
  \end{equation*}
  where
  \[
    T_1(\alpha) = \sum_{m\in \mathcal{M}} \frac{a(m) w_k(r_1) Y}{1+ YX^{k-1}|m^k\alpha- b_1/r_1|}.
  \]
  We apply Dirichlet's lemma on rational approximations to find integers $b$ and $r$ with
  \begin{equation} \label{eqn: conditions of r and b}
    1\leq r \leq x^{-k\rho} Y X^{k-1}, \quad (b,r)=1, \quad  |r\alpha -b| \leq x^{k\rho} Y^{-1} X^{1-k}.
  \end{equation}
  By (\ref{eqn: conditions of M. type I}), (\ref{eqn: conditions of r_1 and b_1}) and (\ref{eqn: conditions of r and b}), we have
  \begin{eqnarray*}
    |b_1 r - bm^k r_1| &=& |r(b_1-r_1m^k\alpha) + r_1 m^k (r\alpha - b)| \\
     &\leq& x^{-k\rho} Y X^{k-1}  X^{1-k} Y^{k\nu -1} + Y^{k\nu} (2M)^k x^{k\rho} Y^{-1} X^{1-k} \\
     &\ll& L^{-k} +  M^{2k} L^{-k}  x^{2k\rho-k+1} y^{-1} \ll L^{-k} < 1,
  \end{eqnarray*}
  whence
  \[
    \frac{b_1}{r_1} = \frac{m^k b}{r}, \quad  r_1 = \frac{r}{(r,m^k)}.
  \]
  Thus, by Lemma \ref{lemma: some inequalities}, we have
  \begin{eqnarray*}
    T_1(\alpha) 
     &\ll&  \frac{yM^{-1}}{1+ yx^{k-1}|\alpha- b/r|}  \sum_{m\sim M} \tau^c(m) w_k\left(\frac{r}{(r,m^k)}\right) \\
     &\ll&  \frac{ w_k(r) r^\varepsilon L^C y}{1+ yx^{k-1}|\alpha- b/r|}.
  \end{eqnarray*}
  Recall that $b$ and $r$ satisfy the conditions (\ref{eqn: conditions of r and b}). We now consider three cases depending on the sizes of $r$ and $|r\alpha -b|$.
  \begin{description}
    \item[\it Case 1] If $r > L^{(k+1)(A+C)} $, then $T_1(\alpha) \ll yL^{-A}$.
    \item[\it Case 2] If $r \leq  L^{(k+1)(A+C)}$ and $|r\alpha -b| > y^{-1} x^{1-k} L^{(k+1)(A+C)}$, then $T_1(\alpha) \ll  yL^{-A}$.
    \item[\it Case 3] If $r \leq  L^{(k+1)(A+C)}$ and $|r\alpha -b| \leq  y^{-1} x^{1-k} L^{(k+1)(A+C)}$, we have
        \begin{eqnarray*}
          |ra-bq| &=& |r(a-q\alpha) + q(r\alpha-b)| \\
            &\leq& \frac{1}{Q} L^{(k+1)(A+C)} + Q y^{-1} x^{1-k} L^{(k+1)(A+C)} \\
            &\leq&  \frac{P L^{(k+1)(A+C)}}{x^{k-2}y^2} + \frac{y L^{(k+1)(A+C)}}{x P}.
        \end{eqnarray*}
        By (\ref{eqn: c_1. I}), we have $|ra-bq|<1$. Hence
        $$ a=b, \quad q=r.$$
        Then
        $$
          T_1(\alpha) \ll  \frac{w_k(q) q^\varepsilon L^C y}{1 + y x^{k-1} |\alpha - a/q|}.
        $$
  \end{description}
  So we have
  \begin{equation*}
    \mathcal{T}_1 \ll y L^{-A} + \frac{w_k(q) q^\varepsilon L^C y}{1 + y x^{k-1} |\alpha - a/q|}.
  \end{equation*}
  For $\alpha \in \mathcal{C}$, we have $q > P = L^{c_1}$.
  If we have $c_1$ as in (\ref{eqn: c_1. I}) then $\mathcal{T}_1 \ll y L^{-A}$.
\end{proof}

\begin{remark}
  One can estimate the following exponential sums of type I/II
  \begin{equation*}
    \sum_{m_1\sim M_1} \sum_{m_2\sim M_2} a(m_1,m_2) \sum_{x<m_1m_2n\leq x+y} e\left((m_1m_2n)^k \alpha\right)
  \end{equation*}
  with some suitable conditions on $M_1$ and $M_2$ as \cite[Lemma 3.2]{kumchev2013weyl} and \cite[Lemma 4.2]{wei2014sums} did, and give a better result than Lemma \ref{lemma: Type I}. 
\end{remark}

\subsection{Type II estimate}
To prove Theorem \ref{thm: exponential sums}, we also need to handle the exponential sums of type II.
Let $a(m)$ and $b(n)$ be arithmetic functions satisfying the property that for all natural numbers $m$ and $n$, one has
\begin{equation}
  a(m) \ll \tau^c(m) \quad \textrm{and} \quad  b(n) \ll \tau^c(n).
\end{equation}
Let $M$ and $N$ be positive parameters, and define the exponential sum
$\mathcal{T}_2 = \mathcal{T}_{2}(\alpha;M)$ by
\begin{equation}
  \mathcal{T}_2(\alpha;M) :=  \sum_{m\sim M} a(m) \sum_{ x< mn\leq x+y} b(n) e\left( (mn)^k \alpha \right).
\end{equation}
The following lemma gives an estimate for $\mathcal{T}_2$ which is an improvement of \cite[Lemma 3.1]{kumchev2013weyl}.

\begin{lemma} \label{lemma: Type II}
  Let $k,\gamma,\sigma_k$ be as in Lemma \ref{lemma: Type I}. Let $0 < \rho < \sigma_k /(8\gamma)$.
  Suppose that $\alpha \in \mathcal{C}$. 
  And let $x$ and $y$ be positive numbers with
  \begin{equation} \label{eqn: conditions of y. type II}
    y = x^\theta, \quad  \frac{1}{(1-2\rho)} \frac{3\gamma-\sigma_k-1}{2(2\gamma-\sigma_k-1)} \leq \theta \leq 1.
  \end{equation}
  Then
  \begin{equation*}
    \mathcal{T}_2 \ll y L^{-A}, 
  \end{equation*}
  provided that
  \begin{equation} \label{eqn: conditions of M. type II}
    x^{1/2} \leq M \ll x^{\theta-2\rho},
  \end{equation}
  and
  \begin{equation} \label{eqn: c_1. II}
    c_1 > 2(k+1)(A+C),
  \end{equation}
  where $C$ is a constant depending on $c$.
\end{lemma}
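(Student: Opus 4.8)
The plan is to follow Kumchev's treatment of type II sums, adapted to the short-interval setting, with the key improvement coming from applying Lemma \ref{lemma: exponential sums in short intervals} in place of the classical Weyl-differencing bound. First I would split the variable $n$ dyadically, so that $n\sim N$ with $MN\asymp x$; by (\ref{eqn: conditions of M. type II}) and (\ref{eqn: conditions of y. type II}) both $M$ and $N$ are constrained, and in particular $N\gg x^{2\rho}y/x$, which will be what makes the inner exponential sum amenable. I would then apply Cauchy--Schwarz in the $m$-variable to pull out the $a(m)$ coefficients, at the cost of squaring the $n$-sum; the diagonal contribution (where the two copies of $n$ coincide) is trivially $\ll yL^{-A}$ after (\ref{eqn: conditions of M. type II}), so the heart of the matter is the off-diagonal sum
\[
  \sum_{m\sim M}\;\sum_{\substack{n_1,n_2\sim N\\ n_1\neq n_2}} b(n_1)\overline{b(n_2)}
  \sum_{\substack{x<mn_i\leq x+y\\ i=1,2}} e\bigl(m^k(n_1^k-n_2^k)\alpha\bigr).
\]

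Writing $n_2=n_1+h$ with $0<|h|\ll N\cdot(y/x)$ (the range of $h$ is restricted because $mn_1$ and $mn_2$ must both lie in an interval of length $y$), the inner sum over $m$ has the shape $\sum e(m^k\beta_h\alpha)$ with $\beta_h=n_1^k-n_2^k$ a nonzero integer depending polynomially on $n_1$ and $h$; after a change of variables this is again a quadratic-or-higher exponential sum in $m$ over a short interval, to which I would apply Lemma \ref{lemma: exponential sums in short intervals} with the given $\gamma$ and an appropriate choice of $\rho$. This produces either the power-saving bound $\ll M^{1-\rho+\varepsilon}$ — which, summed over $h$, $n_1$ and $m$, is $\ll yL^{-A}$ provided the exponents in (\ref{eqn: conditions of y. type II}) and (\ref{eqn: conditions of M. type II}) are arranged correctly — or the existence of a rational approximation $b/r$ to $\beta_h\alpha$ with small denominator and a main term $w_k(r)M/(1+Mx^{k-1}|\beta_h\alpha-b/r|)$. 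For the latter alternative I would argue, exactly as in the proof of Lemma \ref{lemma: Type I}, that the approximation to $\alpha$ itself forced by Dirichlet's lemma must then coincide with $a/q$; using $q>P=L^{c_1}$ together with (\ref{eqn: c_1. II}) and the estimates for sums of $\tau^c(m)w_k(q/(q,\cdot))$ and $\tau^c(n)\tau^c(n+h)w_k(q/(q,R(n,h)))$ from Lemma \ref{lemma: some inequalities}, this contribution is also $\ll yL^{-A}$.

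The main obstacle is bookkeeping the exponent arithmetic: one must verify that the three constraints in (\ref{eqn: conditions of M. type II}) on $M$, the dyadic relation $MN\asymp x$, the admissible range of $\rho$ ($\rho<\sigma_k/(8\gamma)$, the factor $8$ reflecting the Cauchy--Schwarz-plus-squaring step), and the lower bound on $\theta$ in (\ref{eqn: conditions of y. type II}) are mutually compatible and actually yield the power saving after summing the error term $M^{1-\rho+\varepsilon}$ over all the auxiliary variables. In particular, checking that the hypothesis $Y\gg X^{\gamma/(2\gamma-\sigma_k-1)}$ of Lemma \ref{lemma: exponential sums in short intervals} holds for the relevant $M$-sum — which is where the somewhat awkward constant $\tfrac{1}{1-2\rho}\cdot\tfrac{3\gamma-\sigma_k-1}{2(2\gamma-\sigma_k-1)}$ in (\ref{eqn: conditions of y. type II}) originates — requires care, but it is a routine if tedious inequality manipulation once the structure above is in place. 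A secondary technical point is handling the coprimality conditions $(m,q)=1$ and the passage from $\beta_h\alpha$ to $\alpha$ when $(\beta_h,r)>1$; this is dealt with as in Lemma \ref{lemma: Type I} via the identity $r_1=r/(r,\beta_h)$ and the divisor-sum bounds of Lemma \ref{lemma: some inequalities} applied with $R(n,h)=((n+h)^k-n^k)/h$.
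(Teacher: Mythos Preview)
Your outline is essentially the paper's approach: Cauchy in $m$, then Lemma~\ref{lemma: exponential sums in short intervals} applied to the resulting short $m$-sum, with the power-saving alternative handled by exponent bookkeeping and the major-arc alternative reduced back to the approximation $a/q$ of $\alpha$. The parameters you identify (the $\rho<\sigma_k/(8\gamma)$ constraint from Cauchy-plus-squaring, the origin of the constant in~(\ref{eqn: conditions of y. type II}) from the hypothesis $Y\gg X^{\gamma/(2\gamma-\sigma_k-1)}$) are exactly right.

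The one place where your sketch is too optimistic is what you call a ``secondary technical point'' handled ``exactly as in the proof of Lemma~\ref{lemma: Type I}''. In the paper this step is in fact the bulk of the argument. The factor $\beta_h=n_2^k-n_1^k$ is not a pure $k$th power; after extracting $d=(n_1,n_2)$ and writing $n_1=dn$, $n_2=d(n+h)$ with $(n,h)=1$, it factors as $d^k\cdot h\cdot R(n,h)$. One cannot pass from the approximation $b/r$ of $(n_2^k-n_1^k)\alpha$ directly to $a/q$ in a single Dirichlet step, because Lemma~\ref{lemma: some inequalities} only provides averages of $w_k\bigl(q/(q,m^j)\bigr)$ and of $w_k\bigl(q/(q,R(n,h))\bigr)$ separately, not of $w_k\bigl(q/(q,d^k h R(n,h))\bigr)$ jointly. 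The paper therefore runs a three-level cascade: first a Dirichlet approximation to $hd^k\alpha$ (peeling off $R(n,h)$ and summing over $n$), then to $d^k\alpha$ (peeling off $h$ and summing over $h$), and only then to $\alpha$ itself (peeling off $d^k$ and summing over $d$), with Lemma~\ref{lemma: some inequalities} invoked at each stage. Each transition also needs its own verification that the two rational approximations coincide. None of this is conceptually new beyond Type~I, but it is three iterations of that idea rather than one, and you should budget for it accordingly.
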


\begin{proof}
  Set $N = x/M$, $X=x/N$, and $Y = y/N = yM/x$. Define $\nu$ by $Y^\nu = x^{2\rho}L^{-1}$.
  By (\ref{eqn: conditions of M. type II}), we have
  $$\nu < \sigma_k/\gamma.$$
  For $n_1,\ n_2 \leq 2N$, let
  \begin{equation*}
    \mathcal{M}(n_1,n_2) = \{ m\in (M,2M] : x < mn_1, mn_2 \leq x+y \}.
  \end{equation*}
  By Cauchy's inequality and an interchange of the order of summation, we have
  \begin{equation} \label{eqn: mathcal{T}_2 to T_1}
    \mathcal{T}_2^2 \ll y^{1+\varepsilon} M + M L^C T_1(\alpha),
  \end{equation}
  where
  \begin{equation*}
    T_1(\alpha) = \sum_{n_1<n_2} \tau^c(n_1) \tau^c(n_2) \left| \sum_{m\in \mathcal{M}(n_1,n_2)} e\left( \alpha (n_2^k - n_1^k) m^k \right) \right|.
  \end{equation*}
  Let $\mathcal{N}$ denote the set of pairs $(n_1,n_2)$ with $n_1<n_2$ and $\mathcal{M}(n_1,n_2)\neq \varnothing$ for which there exist integers $b$ and $r$ such that
  \begin{equation} \label{eqn: conditions of r and b. Type II}
    1\leq r \leq Y^{k\nu}, \quad (b,r)=1, \quad |r(n_2^k-n_1^k)\alpha - b|\leq Y^{k\nu-1}X^{1-k}.
  \end{equation}
  Since $N/2< n_1 < n_2 \leq 2N$ and $\mathcal{M}(n_1,n_2)\neq \varnothing$, we have $n_2 - n_1 \leq yx^{-1}n_1$. Hence $\#\mathcal{N} \ll xyM^{-2}$. In order to handle the inner summation in $T_1(\alpha)$, we set
  \begin{equation*}
    \begin{split}
    X_1 &:= \max\left\{ M,\frac{x}{n_1} \right\} \asymp M = \frac{x}{N} = X, \\
    Y_1 &:= \min\left\{ 2M,\frac{x+y}{n_2} \right\} - \max\left\{ M,\frac{x}{n_1} \right\} \ll \frac{y}{N} = Y.
    \end{split}
  \end{equation*}
  If $Y_1 < X_1^{\gamma/(2\gamma-\sigma_k-1)}$, by (\ref{eqn: conditions of y. type II}) and (\ref{eqn: conditions of M. type II}), the contribution to $T_1(\alpha)$ is
  \begin{equation*}
    \ll  xy M^{-2} M^{\gamma/(2\gamma-\sigma_k-1)} \ll  y^{2-2\rho +\varepsilon} M^{-1}.
  \end{equation*}
  If $Y_1 \geq X_1^{\gamma/(2\gamma-\sigma_k-1)}$, since $\nu<\sigma_k/\gamma$, we can apply Lemma \ref{lemma: exponential sums in short intervals} with $\rho=\nu$, $x=X_1$, and $y=Y_1$ to the inner summation in $T_1(\alpha)$. We obtain
  \begin{equation} \label{eqn: T_1 to T_2}
    T_1(\alpha) \ll  y^{2-2\rho +\varepsilon} M^{-1} + T_2(\alpha),
  \end{equation}
  where
  \begin{equation*}
    \begin{split}
      T_2(\alpha) &= \sum_{(n_1,n_2)\in \mathcal{N}} \frac{\tau^c(n_1) \tau^c(n_2) w_k(r) Y_1}{1 + Y_1 X_1^{k-1} |(n_2^k - n_1^k)\alpha - b/r|} \\
      &\ll  \sum_{(n_1,n_2)\in \mathcal{N}} \frac{\tau^c(n_1) \tau^c(n_2) w_k(r) Y}{1 + Y X^{k-1} |(n_2^k - n_1^k)\alpha - b/r|}.
    \end{split}
  \end{equation*}

  We now change the summation variables in $T_2(\alpha)$ to
  \begin{equation*}
    d= (n_1,n_2), \quad n=n_1/d, \quad h = (n_2-n_1)/d.
  \end{equation*}
  We obtain
  \begin{equation} \label{eqn: T_2}
    T_2(\alpha) \ll \sum_{dh\leq y/M} {\sum_n}' \frac{\tau^c(nd) \tau^c(nd+hd) w_k(r) Y}{1 + Y X^{k-1} |h d^k R(n,h)\alpha - b/r|},
  \end{equation}
  where $R(n,h) = ((n+h)^k - n^k)/h$ and the inner summation is over $n$ with $(n,h)=1$ and $(nd,(n+h)d)\in \mathcal{N}$.
  For each pair $(d,h)$ appearing in the summation on the right-hand side of (\ref{eqn: T_2}), Dirichlet's lemma on rational approximations yields integers $b_1$ and $r_1$ with
  \begin{equation} \label{eqn: conditions of r_1 and b_1. Type II}
    1\leq r_1 \leq x^{-2k\rho} YX^{k-1}, \quad (b_1,r_1) = 1, \quad |r_1 hd^k\alpha - b_1| \leq x^{2k\rho} Y^{-1}X^{1-k}.
  \end{equation}
  As $R(n,h) \leq 4^k (N/d)^{k-1}$, combining (\ref{eqn: conditions of M. type II}), (\ref{eqn: conditions of r and b. Type II}) and (\ref{eqn: conditions of r_1 and b_1. Type II}), we have
  \begin{equation*}
    \begin{split}
      | b_1 r R(n,h) - br_1 | & = |rR(n,h)(b_1-r_1hd^k\alpha) + r_1(rhd^kR(n,h)\alpha-b)| \\
        &\leq  r_1 Y^{k\nu-1}X^{1-k} + r R(n,h) x^{2k\rho} Y^{-1}X^{1-k} \\
        &\leq  L^{-k} + 4^k N^{k-1} x^{2k\rho} L^{-k} x^{2k\rho} Y^{-1}X^{1-k} <1.
    \end{split}
  \end{equation*}
  Hence,
  \begin{equation} \label{eqn: b/r and r}
    \frac{b}{r} = \frac{b_1 R(n,h)}{r_1}, \quad r = \frac{r_1}{(r_1,R(n,h))}.
  \end{equation}
  Combining (\ref{eqn: T_2}) and (\ref{eqn: b/r and r}), we obtain
  \begin{equation*}
    T_2(\alpha) \ll \sum_{dh\leq y/M}  \frac{\tau^{2c}(d) Y}{1 + Y X^{k-1} N_d^{k-1}|h d^k \alpha - b_1/r_1|} \sum_{\substack{n\sim N_d \\ (n,h)=1}} \tau^{c}(n) \tau^c(n+h) w_k\left( \frac{r_1}{(r_1,R(n,h))} \right),
  \end{equation*}
  where $N_d = N/d$. By Lemma \ref{lemma: some inequalities}, we deduce that
  \begin{equation} \label{eqn: T_2 to T_3}
    T_2(\alpha) \ll y^2 x^{-1+\varepsilon} + T_3(\alpha),
  \end{equation}
  where
  \begin{equation*}
    \begin{split}
      T_3(\alpha) & = \sum_{dh\leq y/M} \frac{w_k(r_1) r_1^\varepsilon L^c \tau^{c}(d) Y N_d}{1 + Y X^{k-1} N_d^{k-1}|h d^k \alpha - b_1/r_1|} \\
        & \ll \sum_{dh\leq y/M} \frac{r_1^\varepsilon L^c \tau^{c}(d) Y N_d}{(r_1 + Y X^{k-1} N_d^{k-1}|r_1 h d^k \alpha - b_1|)^{1/k}}.
    \end{split}
  \end{equation*}

  We now write $\mathcal{H}$ for the set of pairs $(d,h)$ with $dh \leq y/M$ for which there exist integers $b_1$ and $r_1$ subject to
  \begin{equation} \label{eqn: restriction conditions of r_1 and b_1. Type II}
    1\leq r_1 \leq x^{2k\rho}, \quad  (b_1,r_1) = 1, \quad  |r_1hd^k\alpha - b_1| \leq x^{-k+1+2k\rho} Y^{-1}.
  \end{equation}
  We have
  \begin{equation} \label{eqn: T_3 to T_4}
    T_3(\alpha) \ll  y^{2-2\rho+\varepsilon} M^{-1} + T_4(\alpha),
  \end{equation}
  where
  \begin{equation*}
    T_4(\alpha) = \sum_{(d,h)\in \mathcal{H}} \frac{w_k(r_1) r_1^\varepsilon L^c \tau^{c}(d) Y N_d}{1 + Y X^{k-1} N_d^{k-1}|h d^k \alpha - b_1/r_1|}.
  \end{equation*}
  For each $d\leq y/M$, Dirichlet's lemma on rational approximations yields integers $b_2$ and $r_2$ with
  \begin{equation} \label{eqn: conditions of r_2 and b_2. Type II}
    1\leq r_2 \leq x^{k-1-2k\rho} Y/2, \quad  (b_2,r_2) = 1, \quad  |r_2 d^k \alpha - b_2|\leq 2 x^{-k+1+2k\rho} Y^{-1}.
  \end{equation}
  Combining (\ref{eqn: restriction conditions of r_1 and b_1. Type II}) and (\ref{eqn: conditions of r_2 and b_2. Type II}), we obtain
  \begin{equation*}
    \begin{split}
      |b_2r_1h - b_1r_2| & = |r_1h(b_2-r_2d^k\alpha)+r_2(r_1hd^k\alpha-b_1)| \\
          &\leq r_1 h |r_2d^k\alpha - b_2| + r_2 |r_1hd^k\alpha - b_1| \\
          &\leq 1/2 + 2 x^{-k+2+4k\rho} M^{-2} < 1,
    \end{split}
  \end{equation*}
  whence
  \begin{equation*}
    \frac{b_1}{r_1} = \frac{hb_2}{r_2}, \quad r_1 = \frac{r_2}{(r_2,h)}.
  \end{equation*}
  We write $Z_d = Y X^{k-1} N_d^{k-1} |d^k\alpha - b_2/r_2|$ and by Lemma \ref{lemma: some inequalities}, we obtain
  \begin{equation*}
    \begin{split}
      T_4(\alpha) & = \sum_{(d,h)\in \mathcal{H}}  \frac{r_2^\varepsilon L^c \tau^{c}(d) Y N_d}{1 + Z_d h} w_k\left( \frac{r_2}{(r_2,h)} \right) \\
       & \ll \sum_{d\leq y/M} r_2^\varepsilon L^c \tau^c(d) yd^{-1} L \max_{1\leq H \leq \frac{y}{Md}} \sum_{h\sim H} \frac{1}{1 + Z_d h} w_k\left( \frac{r_2}{(r_2,h)} \right) \\ 
       & \ll \sum_{d\leq y/M} \frac{r_2^\varepsilon L^c \tau^c(d) w_k(r_2) y^2 M^{-1}}{d^2 (1 + y(Md)^{-1} Z_d )}.
    \end{split}
  \end{equation*}
  Hence
  \begin{equation} \label{eqn: T_4 to T_5}
    T_4(\alpha) \ll y^{2-2\rho+\varepsilon}M^{-1} + T_5(\alpha),
  \end{equation}
  where
  \begin{equation*}
    T_5(\alpha) = \sum_{d\in \mathcal{D}} \frac{r_2^\varepsilon L^c \tau^c(d) w_k(r_2) y^2 M^{-1}}{d^2 (1 + y^2 x^{k-2} d^{-k} |d^k\alpha - b_2/r_2|)},
  \end{equation*}
  and $\mathcal{D}$ is the set of integers $d\leq x^{2\rho}$ for which there exist integers $b_2$ and $r_2$ with
  \begin{equation} \label{eqn: restriction conditions of r_2 and b_2. Type II}
    1\leq r_2 \leq x^{2k\rho}, \quad  (b_2,r_2)=1, \quad  |r_2 d^k \alpha - b_2| \leq y^{-2} x^{2-k} L^{(k+1)(2A+C)}.
  \end{equation}
  Combining (\ref{eqn: PQR}), (\ref{eqn: Dirichlet's theorem on rational approximations}) and (\ref{eqn: restriction conditions of r_2 and b_2. Type II}), we deduce that
  \begin{equation*}
    \begin{split}
      |r_2 d^k a - b_2 q| & = |r_2 d^k (a-q\alpha) + q(r_2 d^k \alpha - b_2)| \\
          & \leq  r_2 d^k Q^{-1} + q |r_2 d^k \alpha - b_2| \\
          & \leq  x^{4k\rho} Q^{-1} + y^{-2} x^{2-k} L^{(k+1)(2A+C)} Q < 1,
    \end{split}
  \end{equation*}
  whence
  \begin{equation*}
    \frac{b_2}{r_2} = \frac{d^k a}{q} , \quad  r_2 = \frac{q}{(q,d^k)}.
  \end{equation*}
  Thus, recalling Lemma \ref{lemma: some inequalities}, we obtain
  \begin{equation} \label{eqn: T_5}
    \begin{split}
      T_5(\alpha) &\ll  \frac{q^\varepsilon L^c y^2 M^{-1}}{1+ y^2 x^{k-2} |\alpha - a/q|}  \sum_{d\leq x^{2\rho}} \tau^c(d) d^{-2} w_k\left( \frac{q}{(q,d^k)} \right)  \\
      &\ll  \frac{q^\varepsilon L^C w_k(q) y^2 M^{-1}}{1+ y^2 x^{k-2} |\alpha - a/q|}.
    \end{split}
  \end{equation}

  The desired estimate follows from (\ref{eqn: conditions of y. type II}), (\ref{eqn: conditions of M. type II}), (\ref{eqn: mathcal{T}_2 to T_1}), (\ref{eqn: T_1 to T_2}), (\ref{eqn: T_2 to T_3}), (\ref{eqn: T_3 to T_4}), (\ref{eqn: T_4 to T_5}), and (\ref{eqn: T_5}).
\end{proof}

\subsection{Complete the proof of Proposition \ref{prop: C}}
We now deduce Proposition \ref{prop: C} from Lemmas \ref{lemma: Type I}, \ref{lemma: Type II} and Vaughan's identity for $\mu(n)$.

  We put
  \begin{equation} \label{eqn: U & V}
    U = x^{\theta/2 - \rho}, \quad  V = x^{1 - \theta + 2\rho}.
  \end{equation}
  Take
  \begin{equation} \label{eqn: the value of rho}
    \rho = \frac{1}{2} \min\left\{ \frac{\sigma_k}{8\gamma}, \frac{1}{2}\left(\theta-\frac{2}{3}\right) \right\}.
  \end{equation}
  We have
  \begin{equation} \label{eqn: UV & x/U}
    UV \asymp (x+y)/U \asymp x^{1-\theta/2 + \rho} \ll x^{\theta-2\rho}.
  \end{equation}
  And then we apply Vaughan's identity as in Lemma \ref{lemma: Vaughan's identity}.
  Thus we deduce that
  \begin{equation} \label{eqn: S_k to -S_1+S_2}
    S_k(x,y;\alpha) = - S_1 + S_2,
  \end{equation}
  where
  \begin{eqnarray*}
    S_1 &=& \sum_{1\leq v\leq UV} \lambda_0(v) \sum_{x< lv \leq x+y} e\left((lv)^k\alpha\right),\\
    S_2 &=& \sum_{V < u\leq (x+y)/U} \lambda_1(u) \sum_{\substack{x< mu \leq x+y \\ m> U}} \mu(m) e\left((mu)^k\alpha\right),
  \end{eqnarray*}
  in which
  \begin{equation*}
    \lambda_0(v) = \sum_{\substack{md=v \\ 1\leq d\leq V \\ 1\leq m\leq U}} \mu(d)\mu(m)
    \quad \textrm{and} \quad
    \lambda_1(u) = \sum_{\substack{d|u \\ d > V}} \mu(d).
  \end{equation*}

  We begin with estimating the sum $S_2$. Take
  \begin{equation} \label{eqn: d}
    \gamma = (\theta - 3/4)^{-1}.
  \end{equation}
  Since $3/4<\theta\leq 1$, by (\ref{eqn: the value of rho}) we have
  \begin{equation*}
    \frac{1}{(1-2\rho)} \frac{3\gamma-\sigma_k-1}{2(2\gamma-\sigma_k-1)} \leq \theta \leq 1.
  \end{equation*}
  To apply Lemma \ref{lemma: Type II}, we further divide $S_2$ into to two parts
  \begin{equation*}
    S_{21} = \sum_{x^{1/2} \leq u \leq (x+y)/U} \lambda_1(u) \sum_{\substack{x< mu \leq x+y \\ m> U}} \mu(m) e\left((mu)^k\alpha\right),
  \end{equation*}
  and
  \begin{equation*}
    S_{22} = \sum_{V < u < x^{1/2}} \lambda_1(u) \sum_{\substack{x< mu \leq x+y \\ m> U}} \mu(m) e\left((mu)^k\alpha\right).
  \end{equation*}
  On noting that (\ref{eqn: U & V}), (\ref{eqn: UV & x/U}) and $\lambda_1(u) \leq \tau(u)$,
  we can divide the summation over $u$ into dyadic intervals to deduce from Lemma \ref{lemma: Type II} that
  \begin{equation*}
    S_{21} \ll  (\log x) \max_{x^{1/2}\leq M\leq (x+y)/U} \left| \sum_{u \sim M} a(u) \sum_{x< mu \leq x+y} b(m) e\left((mu)^k\alpha\right) \right|
           \ll  y L^{-A},
  \end{equation*}
  where $a(u) = \lambda_1(u)$, and $b(m) = \mu(m)$ if $m>U$ and is 0 if else.
  For $S_{22}$, we first interchange the order of summation, and then by the same argument as above, we obtain
  \begin{equation*}
    S_{22} \ll y L^{-A}.
  \end{equation*}
  Hence we obtain
  \begin{equation} \label{eqn: S_2}
    S_{2} \ll y L^{-A}.
  \end{equation}

  Next we estimate $S_1$. Write
  \begin{equation*}
    S_3(Z,W) = \sum_{Z < v \leq W} \lambda_0(v) \sum_{x< lv \leq x+y} e\left((lv)^k\alpha\right).
  \end{equation*}
  Then we find that
  \begin{equation} \label{eqn: S_1 to S_3}
    S_1 = S_3(0,V) + S_3(V,UV).
  \end{equation}
  Note that (\ref{eqn: U & V}), (\ref{eqn: UV & x/U}) and the bound $|\lambda_0(v)| \leq \tau(v)$, we deduce from Lemma \ref{lemma: Type II} that
  \begin{equation} \label{eqn: S_3(V,UV)}
    S_3(V,UV) \ll y L^{-A}.
  \end{equation}
  We then estimate $S_3(0,V)$.
  Since $3/4<\theta\leq 1$, by (\ref{eqn: U & V}), (\ref{eqn: the value of rho}) and (\ref{eqn: d}), we have
  \begin{equation*}
    V \ll y \left( \frac{y}{x} \right)^{\frac{\gamma+1}{\gamma-\sigma_k-1}}, \quad
    V \ll y x^{-\gamma\rho/\sigma_k}, \quad
    V^{2k} \ll y x^{k-1-2k\rho}.
  \end{equation*}
  So we can divide the summation over $v$ into dyadic intervals to deduce from Lemma \ref{lemma: Type I} that
  \begin{equation} \label{eqn: S_3(0,V)}
    S_3(0,V) \ll y L^{-A}.
  \end{equation}
  Thus, by combining (\ref{eqn: S_3(V,UV)}) and (\ref{eqn: S_3(0,V)}), we deduce form (\ref{eqn: S_1 to S_3}) that
  \begin{equation} \label{eqn: S_1}
    S_1 \ll y L^{-A}.
  \end{equation}

  Proposition \ref{prop: C} follows from (\ref{eqn: S_k to -S_1+S_2}), (\ref{eqn: S_2}) and (\ref{eqn: S_1}).


\medskip
\noindent
{\sc Acknowledgements.} The author would like to thank Professor Jianya Liu
for his valuable advice and constant encouragement. He also want to thank
the referees and editors for their kind comments and valuable suggestions.

\medskip
\bibliographystyle{plain}
\bibliography{hbrbib_exponentialsums}

\end{document}